\newtheorem{Theorem}{Theorem}
\newtheorem{Proposition}[Theorem]{Proposition}
\newtheorem{Lemma}[Theorem]{Lemma}
\theoremstyle{definition}
\newtheorem{Definition}[Theorem]{Definition}
\newtheorem{Example}[Theorem]{Example}
\theoremstyle{remark}
\newtheorem{Remark}[Theorem]{Remark}
\DeclareMathOperator{\DM}{DM}
\newcommand{\abs}[1]{\left|#1\right|}
\DeclareMathOperator{\rank}{rk}
\newcommand{\st}{s.\,t.\ } 
\newcommand{\ie}{\textit{i.\,e.\ }} 
\newcommand{\eg}{\textit{e.\,g.\ }} 
\newcommand{\Z}{\mathbb{Z}}
\newcommand{\R}{\mathbb{R}}
\newcommand{\K}{\mathbb{K}}
\newcommand{\Acal}{\mathcal{A}}
\newcommand{\Dcal}{\mathcal{D}}
\newcommand{\Mcal}{\mathcal{M}}
\newcommand{\Pcal}{\mathcal{P}}
\newcommand{\Gcal}{\mathcal G}
\newcommand{\tutte}{{\mathfrak T}} %
\newcommand{\aritutte}{{\mathfrak M}} %
\newcommand{\latproj}[1]{{\bar #1}} 
\DeclareMathOperator{\hilb}{{{Hilb}}}
\DeclareMathOperator{\LG}{LG}
\DeclareMathOperator{\pr}{pr} 
\DeclareMathOperator{\ind}{Ind} 
\DeclareMathOperator{\indc}{Ind^{cyc}} 
\DeclareMathOperator{\indr}{Ind^{rep}} 
\DeclareMathOperator{\indrr}{\widetilde{Ind}^{rep}} %
\DeclareMathOperator{\fab}{FinAb} %
\title{Stanley--Reisner rings for quasi-arithmetic matroids}
\author{Matthias Lenz}
\thanks{%
The author was supported by a Swiss Government Excellence Scholarship for Foreign
Scholars and subsequently by a fellowship within the postdoc programme of the German
Academic Exchange Service (DAAD)%
}
\date{\today}
\address{Universit\'e de Fribourg, D\'epartement de Math\'ematiques, 1700 Fribourg, Switzerland}
\email{maths@matthiaslenz.eu}
\keywords{Stanley--Reisner ring, arithmetic matroid, Tutte polynomial, toric arrangement, CW poset, matroid complex}
\subjclass[2010]{Primary: 
05B35, 
13F55, 
Secondary:
06A07, 
06A11, 
14N20
}
\begin{document}
 
 \begin{abstract}
 In this note we 
 define a Stanley--Reisner ring for quasi-arithmetic matroids and more general structures.
 To this end, we define two types of 
 CW complexes associated with a quasi-arithmetic matroid 
 that generalize independence complexes of matroids.
 Then we use Stanley's construction of Stanley--Reisner rings for simplicial posets.
\end{abstract}

 \maketitle

 \section{Introduction}

 A (quasi-)arithmetic matroid $\Acal$ is a triple $(E, \Delta, m)$, where $(E,\Delta)$ is a matroid on the ground set $E$ with 
 independence complex $\Delta \subseteq 2^E$
 and $m: 2^E\to \Z_{\ge 1}$ is the so-called multiplicity function, that must satisfy certain axioms
 \cite{branden-moci-2014}.
 In the representable case, \ie when the 
 arithmetic matroid is determined by a list of integer vectors,
 this multiplicity function records data such as the 
 absolute value of the determinant of a basis.

 A toric arrangement is an arrangement of subtori of codimension one in a  real or complex torus 
 (\eg \cite{callegaro-delucchi-2017,delucchi-dantonio-2016,deconcini-procesi-2005,ehrenborg-readdy-slone-2009,lawrence-2011,moci-tutte-2012}).
 While matroids capture a lot of combinatorial and topological information about hyperplane arrangements \cite{orlik-terao-1992, stanley-2007},
 arithmetic matroids carry similar information about 
 toric arrangements \cite{callegaro-delucchi-2017,lawrence-2011,moci-tutte-2012}.
 Toric arrangements are particularly important due to their connection with the problem of
 counting lattice points in polytopes.
 This was implicitly discovered in the 1980s 
 by researchers working on splines \cite{BoxSplineBook}
 such as  Dahmen and Micchelli.
 It was recently made more explicit and put in a broader context by De~Concini, Procesi, Vergne, and others 
 \cite{concini-procesi-book}.
 
 Various simplicial complexes can be associated with a matroid in a natural way:
 the matroid complex (or independence complex), the broken circuit complex, and the order complex of the lattice of flats.
 As topological spaces, they are interesting in their own right and they can be used to derive 
 interesting information about the underlying matroid (see \eg \cite{bjoerner-1992}).
 The broken circuit complex has been generalized to arithmetic matroids in the representable case. This construction plays an important role 
 in several papers on the topology of toric arrangements \cite{callegaro-delucchi-2017,delucchi-dantonio-2016,deconcini-procesi-2005}.
 
 The first goal of this paper is to define an independence complex for an arithmetic matroid. The $i$th entry of the 
 $f$-vector of this complex should be equal to the weighted number of independent sets of cardinality $i$ in the underlying matroid, 
 where each independent set is weighted by its multiplicity.
 It is easy to see that this cannot be achieved by a simplicial complex, so
 we will  we construct a CW complex instead.
 In fact, we will provide two constructions: one works for arbitrary quasi-arithmetic matroids (and even more general structures), the other one
 uses the so-called 
 layer groups that can be associated to a representation of an arithmetic matroid \cite{delucchi-riedel-2015,lenz-computing-2017}.
 The $h$-vectors 
 of arithmetic matroids (which are by construction also the $h$-vectors of the  arithmetic independence complexes)
 and related algebraic structures 
 arising in the theory of vector partition functions 
 have already been studied \cite{dall-thesis,lenz-arithmetic-2016}.

 The second goal of this paper is to construct a Stanley--Reisner ring for arithmetic matroids.
 The construction is straightforward: we are able to prove that our arithmetic independence complexes are simplicial posets. Then we employ a construction of Stanley
 that associates a Stanley--Reisner ring to any simplicial poset \cite{stanley-simplicial-1991}.
 In fact, our construction not only works for the two types of arithmetic independence complexes that we define, but for a more general structure that we call
 independence complex defined by a surjective finite abelian group structure on a simplicial complex.
 Just like the Stanley--Reisner ring of a matroid encodes the $h$-vector of the matroid complex, the arithmetic Stanley--Reisner ring
 encodes the $h$-vector of the arithmetic independence complex.

 A large number of algebraic structures that can be associated with  hyperplane arrangements and matroids
 have turned out to be very interesting and useful
 (\eg \cite{ardila-postnikov-2009,berget-2010,brion-vergne-1999,feichtner-yuzvinsky-2004,orlik-solomon-1980,orlik-terao-1994,terao-2002,wagner-1999}).
 There is a strong interest in  
 inequalities for $f$-vectors and $h$-vector of matroid and broken circuit complexes
 (\eg \cite{chari-1997,hibi-1992,kubitzke-le-2016,swartz-2005}) and algebraic tools have been 
 crucial for some of the proofs.
 This includes log-concavity of $f$-vectors and $h$-vectors of matroid complexes and broken circuit complexes
 \cite{adiprasito-huh-katz-2015,ardila-denham-huh-2017,huh-hvector-2015,lenz-mason-2013}.
  Using the fact that the Stanley--Reisner ring of a matroid complex is level, 
  one can deduce certain inequalities for their $h$-vectors
  (\eg \cite[p.~93 and Proposition~3.3(a)]{stanley-1996}).
  Outside of matroid theory, Stanley--Reisner rings %
  also play an important role.
  For example, 
  the equivariant cohomology ring of a complete and simplicial toric variety is isomorphic to the Stanley--Reisner ring
  of the corresponding fan  \cite[12.4.14]{cox-little-schenck-2011}.
  The hard Lefschetz property of certain Stanley--Reisner rings %
  allowed
  Stanley to prove the  $g$-theorem  for rational simplicial polytopes \cite{stanley-gtheorem-1980}.
  This result was conjectured by McMullen in 1971 \cite{mcmullen-1971} and its many generalisations have received a lot of interest 
  \cite{swartz-2014}.

\subsection*{Acknowledgements}
 The author would like to thank Emanuele Delucchi and Ivan Martino for several interesting discussions.

\section{Background}
 In this section we will introduce the mathematical background, \ie 
 Stanley--Reisner rings of simplicial complexes and simplicial posets, matroids, and arithmetic matroids.

 Throughout this paper, $\K$ denotes some fixed field. 
 
\subsection{Stanley--Reisner rings of simplicial complexes}

An (abstract) \emph{simplicial complex} $\Delta$ is a collection of subsets of a finite set $E$.
We denote by $f_i$ the number of elements of $\Delta$ of cardinality $i$. If $\Delta$ is non-empty, there is a maximal integer $r$  \st
$f_r \neq 0$. Then we say that $\Delta$ has \emph{rank} $r$. %
 The \emph{$f$\!-polynomial} $f_\Delta(t)$ and the \emph{$h$-polynomial} $h_\Delta(t)$ of $\Delta$ are defined as 
\begin{align}
  f_\Delta(t) &:= \sum_{i=0}^r f_i t^{r-i} \text{ and}\\
  h_\Delta(t) &:= f_\Delta(t-1) = \sum_{i=0}^r h_i t^{r-i}. 
\end{align}
 The vectors $(f_0,\ldots, f_r), (h_0, \ldots, h_r) \in \Z^{r+1}$ are called the  \emph{$f$-vector} and the \emph{$h$-vector} of
 $\Delta$, respectively.
 The $f$-vector is usually defined in a slightly different way in the topology 
 literature (the index is shifted by $1$), 
 but our notation has some advantages and it is used in some articles on matroid theory (\eg \cite{bjoerner-1992}).

 Let $\Delta$ be a  simplicial complex 
 on the ground set $\{1,\ldots, N\}$.
 The \emph{Stanley--Reisner ideal} $I_\Delta$ 
 and the \emph{Stanley--Reisner ring} (or face ring) 
 $\K[\Delta]$ 
 are defined as 
 \begin{align}
   I_\Delta &:= \bigl( \{s_{i_1}\cdots s_{i_r}: \{i_1,\ldots,i_r\}\not\in \Delta\} \bigr) \subseteq \K[ s_1,\ldots, s_N]  
   \\
  \label{eq:SRR}
  \text{and } \K[\Delta] &:= \K[s_1,\ldots,s_N]/ I_\Delta.
 \end{align}
$\K[\Delta]$ is a graded ring where each variable has degree $1$. It is known that its  Hilbert series is 
\begin{equation}
\label{eq:HSSR}
  \hilb(\K[\Delta],t) = \frac{h_0 + h_1 t + \ldots + h_r t^r}{(1-t)^r}.
\end{equation}
For more information on these topics, see \cite{stanley-1996}.

\subsection{Stanley--Reisner rings of simplicial posets}

Closely following Stanley's paper \cite{stanley-simplicial-1991}, we will now introduce simplicial posets and their Stanley--Reisner rings.
A \emph{simplicial poset}
is a poset $P$ with a unique minimal element $\hat 0$ \st every interval $[\hat 0, y]$ is a boolean algebra
\cite{bjoerner-1984,garsia-stanton-1984,stanley-simplicial-1991}.
All posets considered in this paper are  assumed to be finite.
If a simplicial poset $P$ is in addition a meet-semilattice, then $P$ is just the face lattice (ordered by inclusion) of a simplicial complex.
Simplicial posets are special cases of CW posets, as defined in \cite{bjoerner-1984}.
This implies that a simplicial poset $P$ is the face poset of a regular CW complex $\Gamma$.
We may informally regard $\Gamma$ as a generalized simplicial complex whose faces
are still simplices, but we allow two faces to intersect in any subcomplex of their boundaries, rather than just in a single face.
For a simplicial poset $P$ we can define a grading $\rho$ as follows: for $y\in P$, 
$\rho(y)$ is defined as the rank of the boolean algebra $[\hat 0, y]$.
Let $f_i$ denote the number of $y\in P$ \st $\rho(y)=i$. 
Then we can define $f$ and $h$-vector/polynomial as above.

\begin{Definition}[{\cite[Definition~3.3]{stanley-simplicial-1991}}]
\label{Definition:SimplicialSRR}
Let $P$ be a simplicial poset with elements $\hat 0=y_0,y_1,\ldots, y_p$.
We define the \emph{Stanley--Reisner ideal} of $P$,  $I_P$
to be the ideal 
in the polynomial ring $\K[y_0, y_1,\ldots, y_p]$
generated by the following elements:
\begin{itemize}
 \item[$(S_1)$] $ y_i y_j $ if $y_i$ and $y_j$ have no common upper bound in $P$, and
 \item[$(S_2)$] $ y_i y_j - ( y_i \wedge y_j)(\sum_z z)$, where $z$ ranges over all \emph{minimal} upper bounds of $y_i$ and $y_j$, otherwise;
 \item[$(S_3)$] $ (y_0 - 1) $.
\end{itemize}
 The \emph{Stanley--Reisner ring} of $P$ is defined as $\K[P] =  \K[y_0, y_1,\ldots, y_p] / I_P$.
\end{Definition}
It is clear that $y_i \wedge y_j$ exists whenever $y_i$ and $y_j$ have an upper bound $z$, since the interval $[\hat 0, z]$ is a boolean algebra and therefore a lattice.
It is easy to see that if $P$ is the face lattice of a simplicial complex $\Delta$, then $ \K[ P ] \cong \K[\Delta] $.

Now using the same notation as above, we define a quasi-grading on the ring $ \K[y_0,y_1,\ldots, y_p]$ by setting $\deg(y_i) :=  \rho(y_i)$.
We do not get an actual graded algebra as we have defined it because $\deg(y_0)=0$, so $\dim_\K( \K[y_0, y_1,\ldots, y_p]_0) =2$ instead of $1$. 
The relations $(S_1)$, $(S_2)$ and $(S_3)$ are homogeneous with respect to this grading, 
so $\K[P]$ inherits a grading from $\K[y_0, y_1,\ldots, y_p]$. 
Moreover, since $\deg( y_0 - 1 ) = 0$ it follows that $ \dim_\K( (\K[P])_0 )=1$, so $\K[P]$ is a genuine graded algebra.

\begin{Proposition}[{\cite[Proposition~3.8]{stanley-simplicial-1991}}]
 \label{Proposition:SimplicialSRR}
  Let $P$ be a finite simplicial poset of rank $r$   with $h$-vector 
  $( h_0, h_1, \ldots, h_r)$. With the grading of $\K[P]$ just defined, we have
  \begin{equation}
       \hilb( \K[P], t) = \frac{ h_0 + h_1 t + \ldots + h_r t^r}{  ( 1 - t )^r }.
  \end{equation}

\end{Proposition}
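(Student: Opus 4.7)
My plan is to produce an explicit $\K$-basis of $\K[P]$ whose quasi-graded enumerator gives the claimed Hilbert series. Following Stanley's approach in \cite{stanley-simplicial-1991}, the natural candidate is as follows: for each $y \in P$ fix an ordering $a_1(y), \ldots, a_{\rho(y)}(y)$ of the atoms of the boolean algebra $[\hat 0, y]$ (the empty list when $y = \hat 0$), and consider the elements
\begin{equation*}
 b(y, e) := y \cdot a_1(y)^{e_1} \cdots a_{\rho(y)}(y)^{e_{\rho(y)}} \in \K[P], \qquad e = (e_1, \ldots, e_{\rho(y)}) \in \Z_{\geq 0}^{\rho(y)},
\end{equation*}
with $b(\hat 0, \emptyset) = 1$. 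Each element $b(y,e)$ has quasi-degree $\rho(y) + e_1 + \cdots + e_{\rho(y)}$, so the enumerator of the family $\{b(y,e)\}$ is
\begin{equation*}
 \sum_{y \in P} \frac{t^{\rho(y)}}{(1-t)^{\rho(y)}} \;=\; \sum_{i=0}^r f_i \frac{t^i}{(1-t)^i}.
\end{equation*}

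For spanning, one rewrites any monomial in the generators as a $\K$-linear combination of the $b(y,e)$ using the defining relations: $(S_3)$ eliminates $y_0$; $(S_1)$ kills products whose factors have no common upper bound; and $(S_2)$ rewrites an incomparable pair $y_iy_j$ as $(y_i\wedge y_j)\sum z$ over the minimal upper bounds, each summand being supported on a chain of length two in $P$ and hence closer to standard form. Iterating and arguing by induction on a suitable term order (for instance lexicographically on the multiset of ranks of the factors), the rewriting terminates in a $\K$-linear combination of the $b(y,e)$.

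The main obstacle is proving linear independence of the $b(y,e)$, which I expect to be the technical heart of the proof. For this I would follow Stanley: either argue by induction on $|P|$, peeling off a maximal element $M$ at each step and exploiting the short exact sequence relating $\K[P]$ with $\K[P \setminus \{M\}]$ (the latter still being a simplicial poset since only a maximal element was removed), or construct for each $y \in P$ an explicit ring homomorphism from $\K[P]$ into an auxiliary polynomial algebra that separates the standard elements attached to $y$ from those attached to the other elements of $P$. In either case, the boolean structure of $[\hat 0, y]$ reduces the claim locally to the linear independence of ordinary monomials in $\rho(y)$ variables.

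Once the basis is established, the Hilbert series equals the enumerator above, and the elementary identity
\begin{equation*}
 \sum_{i=0}^r f_i \frac{t^i}{(1-t)^i} = \frac{1}{(1-t)^r} \sum_{i=0}^r f_i t^i (1-t)^{r-i} = \frac{h_0 + h_1 t + \cdots + h_r t^r}{(1-t)^r},
\end{equation*}
which is equivalent to the definition $h_\Delta(t) = f_\Delta(t-1)$ after the change of variable $t \mapsto 1/t$ and multiplication by $t^r$, completes the proof.
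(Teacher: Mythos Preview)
The paper does not prove this proposition at all; it is quoted verbatim from Stanley's paper \cite{stanley-simplicial-1991} and used as a black box. So there is no ``paper's proof'' to compare against, and your sketch is in fact the outline of Stanley's own argument (Lemma~3.5 and Proposition~3.8 in \cite{stanley-simplicial-1991}).

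Your outline is correct as far as it goes. The candidate basis $\{b(y,e)\}$ is exactly Stanley's, and the enumerator computation at the end is fine. The spanning argument by rewriting is standard, though you should be a bit more careful about the termination measure: the relation $(S_2)$ replaces $y_iy_j$ by a sum of terms $(y_i\wedge y_j)z$, and one convenient measure that strictly decreases is the number of unordered pairs of incomparable factors in the monomial (each such rewrite removes the incomparable pair $\{y_i,y_j\}$ and introduces only comparable ones, since $y_i\wedge y_j \le z$). Your ``lexicographic on the multiset of ranks'' suggestion does not obviously decrease under $(S_2)$, since $\rho(y_i\wedge y_j)+\rho(z)=\rho(y_i)+\rho(y_j)$.

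For linear independence, the cleanest route (and the one Stanley actually takes) is the second option you mention: for each maximal chain $\hat 0 = c_0 < c_1 < \cdots < c_r$ in $P$ one defines a ring homomorphism $\K[P]\to \K[t_1,\ldots,t_r]$ sending $y\mapsto 0$ unless $y\le c_j$ for some $j$, in which case $y\mapsto \prod_{i\in S} t_i$ where $S\subseteq [r]$ records which atoms of $[\hat 0,c_r]$ lie below $y$. One checks this kills $I_P$ and then observes that the images of the $b(y,e)$ with $y$ on the chosen chain are linearly independent monomials; varying the chain separates all $b(y,e)$. This is not hard but does need to be written out; your proposal correctly flags it as the technical core without actually doing it.
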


\subsection{Matroids}

 A \emph{matroid complex} (or independence complex) is a simplicial complex $\Delta$ on a finite ground set  $E$
 \st $\emptyset \in \Delta$ and if $A_1, A_2\in \Delta$ and $\abs{A_2} > \abs{A_1}$, then there is 
 $ e  \in A_2 \setminus A_1$ \st $A_1 \cup\{e\}\in \Delta$.
 The pair $(E,\Delta)$ is called a \emph{matroid}.
 Matroids can also be defined in several other equivalent ways
 \cite{MatroidTheory-Oxley}. The \emph{rank function} of a matroid $\Mcal = (E,\Delta)$ is the function 
 $ \rank : 2^E \to \Z_{\ge 0} $ given by $ \rank(A) = \max_{\{S \in \Delta : S \subseteq A\}} \abs{S} $.
 The integer $r:=\rank(E)$ is called the \emph{rank} of $\Mcal$.
 
 Given a matrix $X\in \K^{r\times N}$, one obtains the matroid $\Mcal_X=([N],\Delta_X)$, where $\Delta_X$ denotes the 
 set of all $S\subseteq [N]$ which index a linearly independent set of columns of $X$.
 The matrix $X$ is called a 
 \emph{representation} of  $\Mcal_X$.
 
 The \emph{Tutte polynomial} \cite{brylawski-oxley-1992} of a matroid $\Mcal=(E,\Delta)$ is defined as 
\begin{equation}
  \tutte_\Mcal(x,y) := \sum_{ A \subseteq E } (x-1)^{ r - \rank(A) } (y-1)^{ \abs{A} - \rank(A) } \in \Z[x,y].
\end{equation}
 It is easy to see that $ f_\Delta(t) = \tutte_\Mcal(t+1,1) $, hence $ h_\Delta(t) =  \tutte_\Mcal( t, 1)$.
 Using \eqref{eq:HSSR}, this implies that the Hilbert series of the Stanley--Reisner ring of a matroid complex is encoded 
 by the Tutte polynomial:
 \begin{equation}
  \label{eq:StanleyReisnerTutte}
   \hilb(\K[\Delta],t)=  \frac{ t^{ r } \tutte_\Mcal(\frac 1t,1)}{  (1 - t)^{ r } }.
 \end{equation}

Matroid complexes of been studied intensively and they have many nice properties \cite{bjoerner-1992,stanley-1996}.
In particular, they are shellable \cite{provan-1997}. This implies that their $h$-vectors are non-negative.

\subsection{Arithmetic matroids}
\label{Subsection:AriMatroids}

Arithmetic matroids capture many combinatorial and topological properties of toric arrangements 
 in a similar way as matroids carry information about the corresponding hyperplane arrangements.
\begin{Definition}[%
\cite{branden-moci-2014,moci-adderio-2013}]
 An \emph{arithmetic matroid} is a triple $(E, \Delta, m)$, where
 $(E, \Delta)$ is a matroid and $m : 2^E \to \Z_{\ge 1}$ denotes the
 \emph{multiplicity function}
 that satisfies the following axioms:
 \begin{itemize}
   \item[(P)]  Let $R\subseteq S\subseteq E$. The set $[R,S]:=\{A: R\subseteq A\subseteq S\}$ is called a \emph{molecule} 
 if $S$ can be written as the disjoint union $S=R\cup F_{RS}\cup T_{RS}$ and for each $A\in [R,S]$,
 $\rank(A)= \rank(R) + \abs{A \cap F_{RS}}$ holds.
 
 For each molecule   $[R,S]$, the following inequality holds:
 \begin{equation}
  \label{eq:Paxiom}
  \rho(R,S) := (-1)^{\abs{T_{RS}}} \sum_{A\in [R,S]} (-1)^{\abs{S} - \abs{A} } m(A) \ge 0.
 \end{equation}
  \item[(A1)]  For all $A \subseteq E$ and $e \in E$:
 if $\rank(A \cup \{e\}) = \rank(A)$, then $m(A \cup \{e\}) \big| m(A)$.
 Otherwise, $m(A) \big| m(A \cup \{e\})$.
  \item[(A2)]  If $[R, S]$ is a molecule and $S=R\cup F_{RS}\cup T_{RS}$, then 
  \begin{equation}
     m(R)m(S) = m(R \cup F_{RS} ) m(R \cup T_{RS} ).
  \end{equation}
\end{itemize}
  A \emph{quasi-arithmetic matroid} is a triple $(E, \Delta, m)$, where
 $(E, \Delta)$ is a matroid and the multiplicity function $m : 2^E \to \Z_{\ge 1}$ satisfies  the axioms (A1) and (A2).
\end{Definition}

 The prototypical example of an arithmetic matroid 
 is defined by a list of vectors $X = (x_1,\ldots, x_N) \subseteq\Z^r$, or equivalently, by a matrix $X\in \Z^{r\times N}$.
 In this case, for a subset  $S\subseteq E=[N]:=\{1,\ldots, N\}$ of cardinality $r$ that defines a basis, we have $m(S) = \abs{\det(S)}$
 and in general $ m(S) := \abs{ (\left\langle  S\right\rangle_\R \cap \Z^r) / \left\langle S\right\rangle}$. 
 Here, $\langle S \rangle\subseteq \Z^r$ denotes the subgroup generated  by $\{x_e: e\in S\}$ and $\langle S \rangle_\R\subseteq \R^r$
 denotes the subspace spanned by the same set.

 In fact, one can represent a slightly more general class of arithmetic matroids by a list of elements in a finitely 
 generated abelian group.
 By the fundamental theorem of finitely generated abelian groups, every finitely generated abelian group $G$ is isomorphic
 to $\Z^r \oplus \Z_{q_1} \oplus \ldots \Z_{q_n}$ for  suitable non-negative integers $d, n, q_1,\ldots, q_n$.
 There is no canonical isomorphism $G \cong \Z^r \oplus \Z_{q_1} \oplus \ldots \Z_{q_n}$.
  However, $G$ has a uniquely determined \emph{torsion subgroup} $G_t \cong \Z_{q_1} \oplus \ldots \Z_{q_n}$ consisting of all the torsion elements.
 There is a free group
 $\latproj{G} := G/ G_t \cong \Z^r$.
 For $X\subseteq G$, we will write $\latproj{X}$ to denote the image of $X$ in $\latproj{G}$.

\begin{Definition}
 Let $\Acal = (E, \Delta, m)$ be an arithmetic matroid.
 Let $G$ be a finitely generated abelian group and let $X = (x_e)_{e\in E}$ be a list of elements of $G$.
 For $A\subseteq E$, let  $G_A$ denote the maximal subgroup of $G$ \st $ \abs{G_A / \left\langle A \right\rangle}$ is finite.
 Again, $\left\langle A \right\rangle \subseteq G$ denotes the subgroup generated by $\{x_e:e\in A\}$.

  $ X \subseteq G $ is called a \emph{representation} of $\Acal$ if the matrix $\bar X\subseteq \bar G$ represents the matroid $(E, \Delta)$ 
  and 
  $m_X(A) = m(A) = \abs{G_A / \left\langle A \right\rangle}$ for all $A\subseteq E$.
  The arithmetic matroid $\Acal$ is called \emph{representable} if it has a representation $X$.
\end{Definition}

 Note that if $A$ is an independent set, then $m_X(A) = m_{\bar X}(A) \cdot \abs{G_t}$ (cf.~\cite[Lemma~8.3]{lenz-ppcram-2017}).
An arithmetic matroid $\Acal = (E, \rank, m)$ is called \emph{torsion-free} if 
 $m(\emptyset)=1$. If such an arithmetic matroid is representable, then it can be represented by a list of vectors in lattice,
 \ie a finitely generated abelian group which is torsion-free.

    The \emph{arithmetic Tutte polynomial} \cite{moci-adderio-2013,moci-tutte-2012}
    of an arithmetic matroid $\Acal = (E,\Delta, m)$ 
   is defined as
 \begin{equation}
 \label{equation:aritutte}
  \aritutte_\Acal ( x, y) := \sum_{ A \subseteq E } m(A) ( x - 1 )^{ r - \rank(A)  }  
  ( y - 1 )^{ \abs{A} - \rank(A)  }.
 \end{equation}
 
 If an arithmetic matroid $\Acal$ is represented by a list $X\subseteq \Z^r$ that is totally unimodular, 
 then the multiplicity function is constant and equal to $1$. Hence the arithmetic Tutte polynomial 
 and the Tutte polynomial are equal in this case.

\section{Arithmetic independence complexes}
 
 In this section we will define several simplicial posets that can be used to define the arithmetic Stanley--Reisner ring using Definition~\ref{Definition:SimplicialSRR}.
 In Subsection~\ref{Subsection:Presheafs} we will give a very general definition that includes the other two as special cases. 
 In Subsection~\ref{Subsection:RepresentationComplex} will construct an arithmetic independence complex, given a representation of an arithmetic matroid.
 In Subsection~\ref{Subsection:CyclicComplex} we will describe another construction that works for quasi-arithmetic matroids and even more general structures.
 In general, in the representable case, these two constructions yield different complexes.

\subsection{Simplicial posets defined by a simplicial complex with a surjective finite abelian group structure}
 \label{Subsection:Presheafs}

 Let $\Delta$ be a  simplicial complex on the ground set $E$.
 Let $\fab$ denote the category of finite abelian groups.
 We say that $\Delta$ has a \emph{surjective finite abelian group structure} $\Gcal$
 if there is a functor $\Gcal : \Delta \to \fab$ \st the images of all homomorphisms are
 surjective maps. Here, 
 $\Delta$ is considered as a category in the usual way of turning a poset such as $(\Delta, \subseteq)$ into a category:
 the objects are the elements of $\Delta$ and 
 for $\sigma_1, \sigma_2\in \Delta$, 
 \begin{equation}
    \hom(\sigma_1,\sigma_2) = \begin{cases}  \{(\sigma_1,\sigma_2)\}   & \text{ if } \sigma_1 \subseteq \sigma_2, \\   
       \emptyset & \text{otherwise}.
     \end{cases}
 \end{equation}
 Composition of homomorphisms is defined via  $(\sigma_1, \sigma_2) \circ(\sigma_2, \sigma_3) = (\sigma_1, \sigma_3)$.

\smallskip
 The definition can be rephrased in the following way without using terminology from category theory.
 We say that $\Delta$ has a surjective finite abelian group structure $\Gcal$
 if there is a map
 $\Gcal : \Delta \to \fab$
 \st the following conditions are satisfied:
 \begin{asparaenum}[(i)]
    \item  for each $ S \in \Delta$ and $a\in E\setminus S$ for which $S \cup \{a\} \in \Delta$,
      there is a   map
      $\pi^\Gcal_{ S, a} : \Gcal( S \cup \{a\}) \twoheadrightarrow \Gcal( S)$.
    \item for all $  S\in \Delta$, $a,b\in E \setminus  S$, $a\neq b$ for which 
      $ S\cup \{a,b\}\in \Delta$,
     the following diagram commutes:
 \begin{equation}
 \label{eq:CommutingMaps}
    \xymatrix{
     \Gcal(  S \cup \{ a,b\}) \ar@{->>}[r]^{\pi^\Gcal_{ S \cup \{ a \}, b}} \ar@{->>}[d]^{\pi^\Gcal_{ S \cup\{b\}, a}}  & \ar@{->>}[d]^{\pi^\Gcal_{ S, a}} \Gcal( S \cup \{ a\}) \\
     \Gcal( S \cup \{ b\}) \ar@{->>}[r]^{\pi^\Gcal_{ S, b}} &  \Gcal( S).
  }
 \end{equation}
\end{asparaenum}
 We call $\Gcal$ \emph{torsion-free} if $\abs{\Gcal(\emptyset)}=1$.
 
 \begin{Definition}
   Let $\Delta$ be a simplicial complex on the ground set $E$   
   with a  surjective finite abelian group structure $\Gcal$.
   We define the independence poset $\ind(E, \Delta, \Gcal)$ as follows:
  the ground set is
  \begin{equation}
   \{ ( S, g) :  S \in \Delta, \, g \in \Gcal( S) \}
  \end{equation}
   and for $a \not \in  S$ for which $S\cup \{a\}\in \Delta$, $( S\cup \{a\},g)$ covers $( S, h)$ if and only if $\pi_{ S,a}^\Gcal(g) = h$.
  
 \end{Definition}

  $\ind(E, \Delta, \Gcal)$ is indeed a poset.
  If one defines a poset via cover relations, the only thing that needs to be checked is that the cover
  relations digraph does not contain a cycle, but this is clear from the definition.

 \smallskip
 For $ S = \{s_1,\ldots, s_k\} \in \Delta$, it will be 
 useful to define the map $\pi^\Gcal_ S : \Gcal( S) \to \Gcal(\emptyset)$ by 
 $ \pi^\Gcal_ S (g ) := \pi^\Gcal_{\{s_1,\ldots, s_{k-1}\}, s_k} \Big( \ldots \big( \pi^\Gcal_{\{s_1\}, s_2} 
     (\pi^\Gcal_{\emptyset, s_1} ( g ) )\big)\ldots\Big)$.
  Since the diagrams in \eqref {eq:CommutingMaps} commute, the order of the elements of $S$ does not matter.

 \begin{Lemma}
 \label{Lemma:PosetsSimplicial}
  If $\Gcal$ is torsion-free, then  $\ind(E, \Delta, \Gcal)$ is a simplicial poset with minimal element $(\emptyset, 0)$.
  In general, $\ind(E, \Delta, \Gcal)$ is a disjoint union\footnote{This means that the Hasse diagrams are disjoint unions in the sense of graph theory.} of simplicial posets
  with minimal elements $\{ (\emptyset, g) : g \in \Gcal(\emptyset) \}$.
  
  Furthermore, the $f$\!-vectors of the connected components of $\ind(E, \Delta, \Gcal)$ are pairwise equal.
 \end{Lemma}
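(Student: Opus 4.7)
The plan is to first enhance the one-step projections $\pi^\Gcal_{S,a}$ to projections $\pi^\Gcal_{T,S}\colon\Gcal(S)\twoheadrightarrow\Gcal(T)$ for every pair $T\subseteq S$ with $S\in\Delta$. Given such $T\subseteq S$, define $\pi^\Gcal_{T,S}$ by choosing any ordering $S\setminus T=\{a_1,\dots,a_k\}$ and composing $\pi^\Gcal_{S\setminus\{a_1\},a_1}\circ\pi^\Gcal_{S\setminus\{a_1,a_2\},a_2}\circ\cdots$. Since the symmetric group on $S\setminus T$ is generated by adjacent transpositions, and each adjacent transposition corresponds to exactly one of the commuting squares of \eqref{eq:CommutingMaps}, the composition is independent of the chosen ordering. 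Functoriality $\pi^\Gcal_{R,S}=\pi^\Gcal_{R,T}\circ\pi^\Gcal_{T,S}$ for $R\subseteq T\subseteq S$ in $\Delta$ follows from the construction, and each $\pi^\Gcal_{T,S}$ is a surjective homomorphism of finite abelian groups.

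With this in hand I would describe the global order: an easy induction on chain length, using the cover relation, gives $(T,h)\le(S,g)$ in $\ind(E,\Delta,\Gcal)$ if and only if $T\subseteq S$ and $h=\pi^\Gcal_{T,S}(g)$. Set $\phi\colon\ind(E,\Delta,\Gcal)\to\Gcal(\emptyset)$ by $\phi(S,g):=\pi^\Gcal_{\emptyset,S}(g)$. By functoriality, $\phi$ is constant on every cover relation, hence on every connected component. Conversely, removing elements of $S$ one at a time produces a descending chain from $(S,g)$ to $(\emptyset,\phi(S,g))$, so $(S,g)$ lies in the component of $(\emptyset,\phi(S,g))$. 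This shows that the components are indexed by $g_0\in\Gcal(\emptyset)$ and that $(\emptyset,g_0)$ is the unique minimum of its component.

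To verify the simplicial poset condition, fix $g_0\in\Gcal(\emptyset)$ and $(S,g)$ in the component of $(\emptyset,g_0)$, and define $\psi\colon 2^S\to[(\emptyset,g_0),(S,g)]$ by $\psi(T):=(T,\pi^\Gcal_{T,S}(g))$. The order characterisation from the previous paragraph shows $\psi$ is a bijection that preserves and reflects the order, so the interval is a Boolean algebra of rank $|S|$. For the $f$-vector claim I would use that $\pi^\Gcal_{\emptyset,S}$ is a surjective group homomorphism, so all of its fibres have the common size $|\Gcal(S)|/|\Gcal(\emptyset)|$ independent of $g_0$; summing over $S\in\Delta$ of a fixed cardinality then gives the same $f_i$ for every component. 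The only non-formal step is the well-definedness of $\pi^\Gcal_{T,S}$ in the first paragraph, and even this reduces to the standard square-to-symmetric-group argument, so no serious obstacle is expected.
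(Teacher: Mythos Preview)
Your proposal is correct and follows essentially the same approach as the paper. The paper only explicitly defines the composite projection $\pi^\Gcal_S:\Gcal(S)\to\Gcal(\emptyset)$ and is rather terse in the Boolean-algebra step (``adding group elements to each set does not change this poset''), whereas you make this precise by constructing $\pi^\Gcal_{T,S}$ for all $T\subseteq S$ and exhibiting the explicit isomorphism $\psi$; but the underlying ideas---well-definedness via the commuting squares, constancy of the image in $\Gcal(\emptyset)$ along cover relations, and equal fibre sizes of a surjective group homomorphism---are identical. One cosmetic slip: in your displayed composition the leftmost factor $\pi^\Gcal_{S\setminus\{a_1\},a_1}$ should be applied first, hence appear on the right.
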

 \begin{proof}
     Let $ S \in \Delta$ and $h \in \Gcal(  S )$.
     By definition, there is a unique $g = \pi_S^\Gcal(h) \in \Gcal(\emptyset)$ \st 
     $(\emptyset, g) \le ( S, h)$.
     Hence each connected component of the Hasse diagram contains at least one element of type $(\emptyset,g)$ for $g\in \Gcal(\emptyset)$.
     Now let $g_1,g_2\in \Gcal(\emptyset)$ \st
     $(\emptyset,g_1)$ and $(\emptyset,g_2)$ are in the same connected component.
     This means that there is a path $ p_0 = (\emptyset,g_1), p_1,\ldots, p_l = (\emptyset,g_2)$ in the Hasse diagram of the poset 
     (considered as an undirected graph)
     that connects the two elements,
     \ie for $i=1,\ldots, l $, $p_i \in \ind(E,\Delta, \Gcal)$ and 
     either $p_i$ covers $p_{i-1}$ or $p_{i-1}$ covers $p_{i}$. Since one of two adjacent elements in the path covers the other one, 
     they must lie above the same minimal element. Hence by induction, all elements in the path must lie above the same minimal element.
     Thus $g_1 = g_2$ and we can deduce that the
     Hasse diagram has 
     $\abs{ \Gcal(\emptyset) }$ connected components and each one contains exactly one element of type $(\emptyset, g)$.

     Let $g,h, S$ be as above.
     We still need to show that $[(\emptyset, g), ( S, h)]$ is a boolean algebra. But this is   
     easy: $[ \emptyset,  S ]$ is the standard example of a  boolean algebra. 
     Adding  group elements to each set (a suitable image of $h$) does not change this poset (for this, the fact that the diagrams commute is used).

    Now let $g_1,g_2\in \Gcal(\emptyset)$ and let $ S\in \Delta$. To prove the last statement, it is sufficient to show that
    $\abs{(\pi^\Gcal_{ S})^{-1} ( g_1)} = \abs{ (\pi^\Gcal_{ S})^{-1}( g_2 )}$. 
    But this is clear since $\pi_{ S}^\Gcal$ is a group homomorphism.
 \end{proof}
 Note that the proof of the lemma relies on the fact  that the maps are fixed in advance and everything commutes (`local'
 commutativity as in the case of (non-representable) matroids over a ring \cite{fink-moci-2016} is not sufficient).

 Let $\Gcal$ be a surjective finite abelian group structure on the simplicial complex $\Delta$.
 We will now construct a torsion-free surjective finite abelian group structure $\tilde \Gcal$ on the same simplicial complex.
 Let $ S  \subseteq E$.
 We define $\tilde\Gcal( S)$ as  follows:
 \begin{equation}
    \tilde \Gcal( S) :=  (\pi_ S^\Gcal)^{-1} ( 0 )  =   \{  h \in \Gcal(S) : \pi_S^\Gcal(h)  = 0     \}.
 \end{equation}
 We define the maps
 $\pi^{\tilde\Gcal}_{ S, a} : \tilde\Gcal( S \cup \{a\}) \to \tilde\Gcal( S)$
  by restricting $\pi^{\Gcal}_{ S, a}$.
 By construction, $\pi^{\tilde\Gcal}_{ S, a}$ is surjective, the image is contained in  $\tilde\Gcal( S)$
 and the maps commute as in \eqref{eq:CommutingMaps}. Hence $\tilde\Gcal$ is also a surjective finite abelian group structure on $\Delta$.

  \begin{Theorem}
  \label{Theorem:SRofSheafIndComplex}
    Let $P = \ind(E, \Delta, \Gcal)$ be an independence poset defined by a  surjective finite abelian group structure on a simplicial complex of rank $r$.
    Suppose that $\Gcal$ is  torsion-free, \ie $\Gcal(\emptyset)\cong\{0\}$. Then the 
    Stanley--Reisner ring  $\K[P]$  satisfies
    \begin{equation}
        \hilb( \K[P], t) = \frac{ h_P(t) }{ (1-t)^r }.
    \end{equation}  
    Now suppose that $\Gcal$ has torsion and let $\tilde P = \ind(E,\Delta,\tilde\Gcal)$ denote the independence poset of the corresponding torsion-free 
    surjective finite abelian group structure.
    Then the
    Stanley--Reisner ring  $\K[\tilde P]$ satisfies
 \begin{equation}
      \hilb( \K[\tilde P], t) = \frac{1}{\abs{\Gcal(\emptyset)}} \cdot \frac{ h_{P}(t) }{ (1-t)^r }.
 \end{equation}  
 \end{Theorem}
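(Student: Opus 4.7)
The plan is to handle the two cases in sequence, reducing the second to the first. For the torsion-free case, Lemma~\ref{Lemma:PosetsSimplicial} already tells us that $P=\ind(E,\Delta,\Gcal)$ is a genuine simplicial poset with unique minimal element $(\emptyset,0)$. Its rank is $r$, since the grading $\rho(S,g)=|S|$ is bounded by the rank of $\Delta$ and achieves it on bases. Consequently, Proposition~\ref{Proposition:SimplicialSRR} applies verbatim and yields
\begin{equation*}
  \hilb(\K[P],t) = \frac{h_P(t)}{(1-t)^r},
\end{equation*}
which is the first claim.

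For the second claim, I would first observe that $\tilde\Gcal$ is torsion-free by construction (the preimage of $0$ under $\pi_\emptyset^\Gcal=\id$ is $\{0\}$), so the first part of the theorem applies to $\tilde P=\ind(E,\Delta,\tilde\Gcal)$, giving $\hilb(\K[\tilde P],t) = h_{\tilde P}(t)/(1-t)^r$. Thus everything reduces to comparing the $h$-polynomials $h_{\tilde P}$ and $h_P$. Since $h$ is a linear change of variables applied to $f$, it suffices to compare $f$-vectors.

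Fix $S\in\Delta$. The composed map $\pi_S^\Gcal : \Gcal(S) \to \Gcal(\emptyset)$ is a surjective group homomorphism (it is a composition of surjective group homomorphisms, and the commuting squares in \eqref{eq:CommutingMaps} ensure this does not depend on the chosen chain). The first isomorphism theorem then gives
\begin{equation*}
  |\tilde\Gcal(S)| \;=\; \bigl|\ker \pi_S^\Gcal\bigr| \;=\; \frac{|\Gcal(S)|}{|\Gcal(\emptyset)|}.
\end{equation*}
Summing over all $S\in\Delta$ of a fixed cardinality $i$, this yields $f_i^{\tilde P}=f_i^P/|\Gcal(\emptyset)|$, hence $h_{\tilde P}(t)=h_P(t)/|\Gcal(\emptyset)|$. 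Substituting into the Hilbert series formula for $\tilde P$ produces the desired identity.

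The only genuinely substantive step is the identity $|\tilde\Gcal(S)| = |\Gcal(S)|/|\Gcal(\emptyset)|$, which in turn rests on the surjectivity of the composite $\pi_S^\Gcal$ and the well-definedness of this composite. Both are immediate from the defining properties of a surjective finite abelian group structure, so no further technical obstacle is anticipated; the remainder is bookkeeping with the $f$- and $h$-polynomials.
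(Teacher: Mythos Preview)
Your proof is correct and follows essentially the same approach as the paper, which simply cites Proposition~\ref{Proposition:SimplicialSRR} and Lemma~\ref{Lemma:PosetsSimplicial}. The only cosmetic difference is that for the torsion case you re-derive the identity $f_i^{\tilde P}=f_i^{P}/|\Gcal(\emptyset)|$ via the first isomorphism theorem, whereas the paper absorbs this into the last clause of Lemma~\ref{Lemma:PosetsSimplicial} (equal $f$-vectors of the connected components, of which $\tilde P$ is the one over $(\emptyset,0)$); the underlying argument is identical.
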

 \begin{proof}
     This follows directly from 
     Proposition~\ref{Proposition:SimplicialSRR} and Lemma~\ref{Lemma:PosetsSimplicial}.
 \end{proof}

 Below, we will define two types of independence complexes for (quasi-)arithmetic matroids, 
 which will allow us to define the arithmetic Stanley--Reisner ring whose Hilbert series encodes the 
 arithmetic $h$-vector.

 \begin{Theorem}
  \label{Theorem:SRofAriIndComplex}
    Let $P$ be a poset for which one of the two following statements holds:
    \begin{itemize}
     \item let $X\in \Z^{r \times N}$ be a matrix that represents 
      an arithmetic matroid  $\Acal  = (E_X,\Delta_X, m_X)$ of rank $r$ and 
        $P = \indr(E_X, \Delta_X, \Gcal_X)$ is the independence poset derived from $X$ (cf.~Subsection~\ref{Subsection:RepresentationComplex}). 
     \item let $\Acal = (E,\Delta, m )$ be a (weak quasi-)arithmetic matroid of rank $r$ and 
          $P = \indc(E, \Delta, \Gcal)$  is its cyclic independence poset (cf.~Subsection~\ref{Subsection:CyclicComplex}).
    \end{itemize}
    In both cases, the following statements hold for the arithmetic Stanley--Reisner ring $\K[P]$:
 
    suppose $\Acal$ is torsion-free, \ie $m(\emptyset)=1$. Then 
  \begin{align}
    \hilb( \K[P], t) &= \frac{ \aritutte_\Acal(t,1) }{ (1-t)^r }. \\
%
 \text{In general, we have }
      \hilb( \K[\tilde P], t) &= \frac{1}{m(\emptyset)} \cdot \frac{ \aritutte_{\Acal}(t,1) }{ (1-t)^r }, 
 \end{align}  
 where $\tilde P$ denotes the independence poset of the torsion-free structure associated with $\Acal$.
 \end{Theorem}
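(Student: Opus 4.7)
The plan is to reduce the claim to Theorem~\ref{Theorem:SRofSheafIndComplex} by identifying the $h$-polynomial of the arithmetic independence poset $P$ with $\aritutte_\Acal(t,1)$. Once this identification is in hand, both parts of the statement follow mechanically.

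First I would unpack the $f$-vector of $P = \ind(E,\Delta,\Gcal)$. By Lemma~\ref{Lemma:PosetsSimplicial}, an element of $P$ of rank $i$ is a pair $(S,g)$ with $S\in\Delta$, $|S|=i$, and $g\in\Gcal(S)$, so
\begin{equation}
  f_i(P) = \sum_{S \in \Delta,\, |S|=i} \abs{\Gcal(S)}.
\end{equation}
The crucial input from the two subsections is that $\abs{\Gcal(S)} = m(S)$ for every independent set $S$. In both the representation-based construction and the cyclic construction this identification is the defining property of the functor $\Gcal$, and I would take it for granted at this stage.

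Granting it, $f_P(t) = \sum_{S \in \Delta} m(S)\, t^{r-|S|}$. Since an independent set is precisely a subset $A$ with $\rank(A)=|A|$, setting $y=1$ in~\eqref{equation:aritutte} kills every summand indexed by a dependent set and leaves
\begin{equation}
  \aritutte_\Acal(x,1) = \sum_{S \in \Delta} m(S)\, (x-1)^{r-|S|} = f_P(x-1),
\end{equation}
so that $h_P(t) = f_P(t-1) = \aritutte_\Acal(t,1)$. Plugging this into Theorem~\ref{Theorem:SRofSheafIndComplex} immediately yields the torsion-free statement; in the general case the same theorem produces the extra factor $1/\abs{\Gcal(\emptyset)}$, which equals $1/m(\emptyset)$ by the equality at rank zero.

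The main obstacle is not this computation but the verification, inside Subsections~\ref{Subsection:RepresentationComplex} and~\ref{Subsection:CyclicComplex}, that the respective functor $\Gcal$ really is a surjective finite abelian group structure---that is, that its transition maps are surjective and fit into the commuting squares~\eqref{eq:CommutingMaps}---and that it satisfies $\abs{\Gcal(S)} = m(S)$ on all of $\Delta$. Once both properties are established, Theorem~\ref{Theorem:SRofAriIndComplex} follows formally from Theorem~\ref{Theorem:SRofSheafIndComplex} together with the Tutte-polynomial identity above.
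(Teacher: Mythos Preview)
Your proposal is correct and follows essentially the same route as the paper: the paper's proof simply cites Theorem~\ref{Theorem:SRofSheafIndComplex} together with Lemma~\ref{Lemma:RepPresheaf} and Lemma~\ref{Lemma:CycPresheaf}, and you have unpacked the one step the paper leaves implicit, namely the identification $h_P(t)=\aritutte_\Acal(t,1)$ via $f_i(P)=\sum_{|S|=i,\,S\in\Delta}\abs{\Gcal(S)}=\sum_{|S|=i,\,S\in\Delta} m(S)$. Your remark that the real content lies in verifying the surjective finite abelian group structure (and $\abs{\Gcal(S)}=m(S)$) is exactly what Lemmas~\ref{Lemma:RepPresheaf} and~\ref{Lemma:CycPresheaf} are there for.
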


\begin{proof}%
 This follows directly from Theorem~\ref{Theorem:SRofSheafIndComplex}
 and Lemma~\ref{Lemma:RepPresheaf} and Lemma~\ref{Lemma:CycPresheaf}  below.
\end{proof}

\subsection{Arithmetic independence complexes derived from a representation}
 \label{Subsection:RepresentationComplex}
 
  Let $X \in \Z^{r\times N}$ be a matrix. 
  For $S\subseteq [N]$, let
  $X[S]$ denote the submatrix of $X$ whose columns are indexed by $S$.
   Following  \cite{delucchi-riedel-2015,lenz-computing-2017}, we define
  \begin{align}
    W(S) &:=  X[S]^T \R^r \cap \Z^r,
    \displaybreak[2]\\
    I(S) &:= X[S]^T \Z^r, \displaybreak[2]\\
    Z(S) &:= \Z^S / I(S), \text{ and} \displaybreak[2]\\
    \LG(S) &:= W(S) / I(S). 
  \end{align}
  The groups $\LG(S)$ for $S\subseteq [N]$ are called \emph{layer groups}, as they encode a lot of information about the 
  poset of layers of the corresponding toric arrangement.
 $\LG(S)$ is the torsion subgroup of $Z(S)$  \cite{delucchi-riedel-2015}.
 For $s\in [N] \setminus S$, let
 $\pr_{S,s} : \Z^{S\cup \{s\}} \to \Z^S$ denote the projection that forgets the coordinate corresponding to $s$.
 This induces a map $\bar\pr_{S,s} : \LG(S \cup \{s\}) \to \LG(S)$
 (\cite[Lemma~2 and Lemma~3]{lenz-computing-2017}).

\begin{Remark}
\label{Remark:LayerGroupLatticePoints}
  Let $X \in \Z^{r\times N}$ be a matrix. 
  It follows from the definition that if $S\subseteq [N]$ is independent, then 
  the set of lattice points in the half-open parallelepiped $\Pcal_S$ that is spanned by the columns of $X[S]$ is a set of representatives
  for the elements of $\LG(S)$.
  If $S\cup \{s\}$ is also independent, then $ \Pcal_S $ is a facet of $ \Pcal_{ S\cup \{s\} }$.
  Note that the projection map $\pr_{S, s}$ sends 
  a point in $\Pcal_{S\cup \{s\}}$ to a point in $\Z^S$ that is not necessarily contained in $\Pcal_S$,
  but of course it has a representative modulo $I(S)$ that is contained in $\Pcal_S$
  (cf.~Figure~\ref{Figure:TA}).  
\end{Remark}

\begin{Lemma}
  \label{Lemma:RepPresheaf}
  Let $X \in \Z^{r \times N}$ be a matrix. 
  Let $E = [N]$ and let $\Delta_X$ be the matroid complex  of the matroid represented by $X$.
  
  For $S\in \Delta_X$, let $\Gcal_X(S):= \LG(S)$.
  Furthermore, for $s\in E\setminus S$ for which $S\cup \{s\}\in \Delta_X$, 
  let $\pi_{ S, s}^\Gcal := \bar\pr_{ S, s} : \Gcal(S\cup\{s\}) \to \Gcal(S)$.

    Then $(E,\Delta_X, \Gcal_X)$ is a surjective finite abelian group structure on $\Delta$.

  \end{Lemma}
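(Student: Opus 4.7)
The plan is to verify the two conditions from the definition of a surjective finite abelian group structure on $\Delta_X$: condition (i), that each $\pi^\Gcal_{S,s} := \bar\pr_{S,s}$ is a \emph{surjective} homomorphism $\LG(S \cup \{s\}) \to \LG(S)$, and condition (ii), that the squares in \eqref{eq:CommutingMaps} commute. Well-definedness of $\bar\pr_{S,s}$ as a group homomorphism is already recorded in Lemmas~2--3 of \cite{lenz-computing-2017}; concretely, the coordinate projection $\pr_{S,s}\colon \Z^{S\cup\{s\}}\to \Z^S$ sends $I(S\cup\{s\}) = X[S\cup\{s\}]^T \Z^r$ into $I(S) = X[S]^T \Z^r$, and likewise $W(S\cup\{s\})$ into $W(S)$, so it descends to a homomorphism on the quotients $\LG(\cdot) = W(\cdot)/I(\cdot)$. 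I would begin by recalling this.

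For surjectivity I would argue directly. Given $w \in W(S)$, write $w = X[S]^T v$ for some $v \in \R^r$; this is possible because $S$ is independent, so $X[S]^T\colon \R^r\to \R^S$ is surjective. The only potential obstruction to lifting $w$ through $\pr_{S,s}$ to an element of $W(S\cup\{s\})$ is that the $s$-coordinate of $X[S\cup\{s\}]^T v$, namely $\langle x_s, v\rangle$, need not be an integer. However, since $S\cup\{s\}$ is independent, $x_s$ is not in the column span of $X[S]$, so the linear functional $v\mapsto \langle x_s, v\rangle$ does not vanish identically on $\ker X[S]^T$. Choose $v_0 \in \ker X[S]^T$ with $\langle x_s, v_0\rangle = 1$ and set $v' := v - \langle x_s, v\rangle v_0$. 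Then $X[S]^T v' = w$ and $\langle x_s, v'\rangle = 0$, so $y := X[S\cup\{s\}]^T v' \in W(S\cup\{s\})$ (its $S$-coordinates form $w \in \Z^S$, its $s$-coordinate is $0$), and $\bar\pr_{S,s}([y]) = [w]$.

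For the commutation condition (ii), the two compositions in \eqref{eq:CommutingMaps} are both induced by the single coordinate projection $\Z^{S\cup\{a,b\}} \to \Z^S$ that forgets both $a$ and $b$; since this map descends to the $W/I$ quotients irrespective of which intermediate factor is taken, the square commutes on the nose. Thus the only step with non-routine content is the surjectivity argument, which is the one place where the full independence of $S\cup\{s\}$ (as opposed to just of $S$) is used; the remaining verifications are bookkeeping with the definitions of $W$, $I$, and $\LG$.
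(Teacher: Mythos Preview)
Your proof is correct and follows the same overall structure as the paper's: verify surjectivity of each $\bar\pr_{S,s}$ and then check that the projection squares commute. The paper handles both steps by citing \cite{lenz-computing-2017} (Lemma~5 for surjectivity, Lemma~2 for the projection identities needed to pass to quotients), whereas you supply a self-contained argument for surjectivity by adjusting the preimage $v$ along $\ker X[S]^T$ so that the $s$-coordinate becomes integral---a clean use of the independence of $S\cup\{s\}$ that makes the role of that hypothesis transparent. One small quibble: the existence of $v$ with $w = X[S]^T v$ follows already from $w\in W(S)\subseteq X[S]^T\R^r$, so the independence of $S$ is not needed for that step (though it is what guarantees $W(S)=\Z^S$ and that $\LG(S)$ is finite).
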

 \begin{proof}
   By assumption, $S \cup \{s\}$ is independent.  
       Hence by  \cite[Lemma~5]{lenz-computing-2017}, $ \pi^\Gcal_{S,s} : \Gcal_X(S \cup \{s\}) \to \Gcal_X(S)$ is surjective. 
       To see that the projection maps commute in the required way, note that this is trivial
       for the projection $\pr_{S,s} : W(S\cup\{s\}) \to W(S)$ and we can easily pass over to the quotient since
       $\pr_{S,s} (I(S \cup \{s\})) = I(S )$ \cite[Lemma~2]{lenz-computing-2017}.
\end{proof}
 
 \begin{Definition}[Arithmetic independence complex derived from a representation]
   Let $X \in \Z^{r\times N}$ be a matrix and let  $E$, $\Delta_X$, and $\Gcal_X$ be as in Lemma~\ref{Lemma:RepPresheaf}.
   Let $\Acal_X$ be the arithmetic matroid represented by $X$.
           Then we call 
            $\indr(X):=(E_X,\Delta_X,\Gcal_X)$ the \emph{arithmetic independence complex} of $\Acal_X$ derived from the representation $X$.

   Now let $X$ be a list of elements of a finitely generated abelian group $G$ of rank $r$.
   Let $\bar X$ denote the projection of $X$ to a lattice as explained in Subsection~\ref{Subsection:AriMatroids}.
   Then we call ${\indrr}(\bar X):=(E,\Delta_{\bar X},\Gcal_{\bar X})$ the\footnote{Note 
   that while the construction of $\Gcal_{\bar X}$ requires the choice of a non-canonical isomorphism $G / G_t \cong \Z^r$,
   where $G/G_t$ denotes the lattice containing the elements of $\bar X$, the torsion-free independence complex is independent of this choice.
   }
   \emph{torsion-free arithmetic independence complex} of  $\Acal_X$ derived from the representation $X$.  
 \end{Definition}

 \begin{Example}
 \label{Example:RepIndComplex}
  Let us consider the matrix $X=\begin{pmatrix} 2 & -2 \\ 2 & 2 \end{pmatrix}$. 
 \begin{figure}[t]
  \begin{center}
   \input{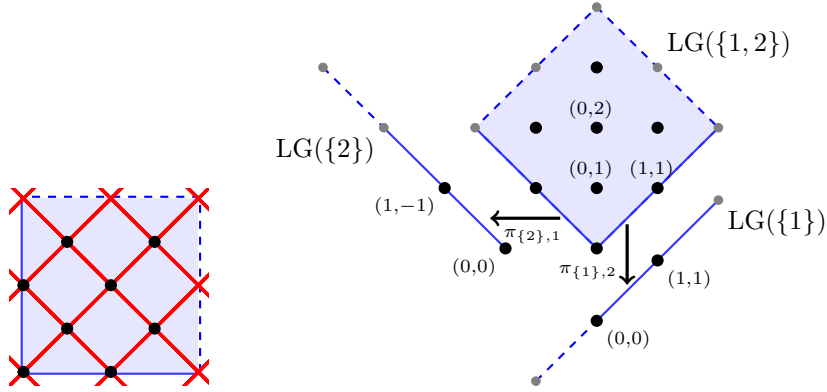} \,
   \newlength{\vertexmain}
\setlength{\vertexmain}{2.6pt}
\newlength{\vertexsec}
\setlength{\vertexsec}{2pt}
\begin{tikzpicture}[line join=round, scale=0.8,                                                                                                                                                                                                                         
              vertex/.style={color=black},                                                                                                                                                                                                                            
              secondary_vertex/.style={color=gray},                                                                                                                                                                                                                   
              line/.style={very thick},                                                                                                                                                                                                                               
              cube/.style={color=blue!80!white, thick},                                                                                                                                                                                                               
              cube_thin/.style={color=blue,  thick, dashed}                                                                                                                                                                                                           
              ]                                                                                                                                                                                                                                                       

   \draw[draw=none,fill = blue!10!white, ultra thick] (0,0) -- (2,2) -- (0,4) -- (-2,2) -- cycle; 
   \draw[cube_thin] (0,0) -- (2,2) -- (0,4) -- (-2,2) -- cycle;                                                                                                                                                                                                   
   \draw[cube] (-2,2) -- (0,0) -- (2,2);                                                                                                                                                                                                   
   \filldraw [vertex] (0, 0) circle (\vertexmain);
   \filldraw [vertex] (-1, 1) circle (\vertexmain);
   \filldraw [vertex] (0, 1) node[anchor=south, black] {$\scriptstyle (0,1)\;\;$} circle (\vertexmain);
   \filldraw [vertex] (1, 1) node[anchor=south, black] {$\scriptstyle (1,1)\;\;$} circle (\vertexmain);
   \filldraw [vertex] (-1, 2) circle (\vertexmain);
   \filldraw [vertex] (0, 2) node[anchor=south, black] {$\scriptstyle (0,2)\;\;$} circle (\vertexmain);
   \filldraw [vertex] (1, 2) circle (\vertexmain);
   \filldraw [vertex] (0, 3) 
      circle  (\vertexmain);
   \filldraw [secondary_vertex] (-2, 2) circle (\vertexsec);
   \filldraw [secondary_vertex] (-1, 3) circle (\vertexsec);
   \filldraw [secondary_vertex] ( 0, 4) circle (\vertexsec);
   \filldraw [secondary_vertex] ( 1, 3) node[anchor=south west, black] {$\LG(\{1,2\})$} circle (\vertexsec);
   \filldraw [secondary_vertex] ( 2, 2) circle (\vertexsec);

   \draw[cube] ( 0 -1.5, 0 ) -- (-2 - 1.5, 2);                                                                                                                                                                                                   
   \draw[cube_thin] ( -3 -1.5, 3 ) -- (-2 - 1.5, 2);                                                                                                                                                                                                   
   \filldraw [vertex] ( 0 -1.5, 0 ) node[anchor=north east, black] {$\scriptstyle (0,0)$}  circle (\vertexmain);
   \filldraw [vertex] (-1 - 1.5, 1) node[anchor=north east, black] {$\scriptstyle (1,-1)$}  
      circle (\vertexmain); 
   \filldraw [secondary_vertex] (-2 - 1.5, 2) node[black, anchor=north east] {$\LG(\{2\})$}  circle (\vertexsec);
   \filldraw [secondary_vertex] (-3 - 1.5, 3)  
      circle (\vertexsec);
   \draw[very thick, ->] (-0.6, 0.5) --  node[anchor=north] {$\;\;\scriptstyle\pi_{\{2\},1}$} (-1.75, 0.5);                                                                                                                                                                                                   

   \draw[cube_thin] (-1, -1 - 1.2) -- (0, 0 - 1.2);                                                                                                                                                                                                   
   \draw[cube] (0,0 - 1.2) -- (2,2 - 1.2);                                                                                                                                                                                                   
   \filldraw [vertex] (0, 0-1.2) node[anchor=north west, black] {$\scriptstyle (0,0)$} circle (\vertexmain);
   \filldraw [vertex] (1, 1 - 1.2) node[anchor=north west, black] {$\scriptstyle (1,1)$} circle (\vertexmain); 
   \filldraw [secondary_vertex] ( 2, 2 - 1.2) node[black, anchor=north west] {$\LG(\{1\})$} circle (\vertexsec);
   \filldraw [secondary_vertex] ( -1, -1 - 1.2) circle (\vertexsec);
   \draw[very thick, ->] ( 0.5, 0.4) --  node[anchor=north east] {$\scriptstyle \pi_{\{1\},2}$} (0.5, -0.6); 
\end{tikzpicture}
   \end{center}
 \caption{Left: the toric arrangement  defined in Example~\ref{Example:RepIndComplex} drawn in $\R^2/\Z^2$.
   Right: the corresponding layer groups, represented by lattice points in half-open parallelepipeds.}
 \label{Figure:TA}
 \end{figure}
We denote the columns by $a$ and $b$.
    The arithmetic Tutte polynomial is $\aritutte_X(x,y)= x^2 + 2 x + 5$.
  For the face poset $P$ of the representable independence complex this implies
    $f_P(t) = \aritutte_X(t+1,1) = t^2 + 4t + 8$ and 
    $h_P(t) = \aritutte_X(t,1) = t^2 + 2 t + 5$.
     The $f$-vector $(f_0,f_1,f_2)=(1,4,8)$ can also be read 
     off from the     corresponding toric arrangement, which is shown in Figure~\ref{Figure:TA}:
     there are $f_2=8$ simple vertices (\ie vertices
     contained in exactly $r=2$ $1$-dimensional subtori), there are $f_1=4$ connected $1$-dimensional subtori, 
     and the torus has $f_0=1$ connected component.
   
 Now let us construct $\indr(X)$. Representatives of the group elements of 
 $\LG(\{1,2\})$ are given by the lattice points of the half-open parallelepiped spanned by the columns of $X$, \ie the set
 $\{(0,0),(-1,1),(0,1),(1,1),(-1,2),(0,2),(1,2),(0,3)\}$. Furthermore, $\LG(\{1\})\cong \LG(\{2\}) \cong \Z/2\Z = \{\bar 0,\bar1\}$.
 Now we calculate the images of $\LG(\{1,2\})$ in the other two groups, \ie we forget a coordinate and reduce the other one modulo $2$:
 \\[2mm]
 \begin{tabular}{l|cccccccc}
  $\LG(\{ 1,2\})$ &  (0,0) & (-1,1) & (0,1) & (1,1) & (-1,2) & (0,2) & (1,2) & (0,3) \\\hline
  $\LG(\{1\})$ &  0 & 1 & 0 & 1 & 1 & 0 & 1 & 0 \\ 
  $\LG(\{2\})$ &  0 & 1 & 1 & 1 & 0 & 0 & 0 & 1
 \end{tabular}\\[2.5mm]
This allows us to construct the poset, which is shown in Figure~\ref{Figure:PosetRep}.

The Stanley--Reisner ideal is
   \begin{equation}
   \begin{split}
    I_P &= 
    \bigl( a_0 C_{-1,1}, a_0C_{11}, a_0C_{1,2}, a_0 C_{-1,2}, a_1 C_0,\ldots, \\
    & \qquad 
    a_0a_1, b_0b_1, \quad C_0 C_{-1,1},\ldots, 
    C_{0,2} C_{-1,2},
    \\ & \qquad
        a_0 b_0 - ( C_0 + C_{0,2}), a_0 b_1 - (  C_{0,1} + C_{0,3}  ), 
    \\ & \qquad \quad
        a_1b_0 - ( C_{-1,2} + C_{1,2}  ), a_1 b_1 - ( C_{1,1} + C_{-1,1} )
   \bigr).
   \end{split}
  \end{equation}
  
 \end{Example}

\begin{figure}
\begin{center}
        \begin{tikzpicture}[line join=round, scale=0.6]
            \node (0) at (0, -0.5) {$\hat{0}$};
            \node (a0) at (-3,1) {$a_0$};
            \node (a1) at (-2,1) {$a_1$};
            \node (b0) at (2,1) {$b_0$};
            \node (b1) at (3,1) {$b_1$};
            \node (ab00) at (-7,4) {$C_{0}$};
            \node (ab-11) at (-5,4) {$C_{-1,1}$};
            \node (ab01) at (-3,4) {$C_{0,1}$};
            \node (ab11) at (-1,4) {$C_{1,1}$};
            \node (ab-12) at (7,4) {$C_{-1,2}$};
            \node (ab02) at (5,4) {$C_{0,2}$};
            \node (ab12) at (3,4) {$C_{1,2}$};
            \node (ab03) at (1,4) {$C_{0,3}$};
            \draw (0) -- (a0);
            \draw (0) -- (a1);
            \draw (0) -- (b0);
            \draw (0) -- (b1);
            \draw (a0) -- (ab00);
            \draw (a0) -- (ab01);
            \draw (a0) -- (ab02);
            \draw (a0) -- (ab03);
            \draw (b0) -- (ab00);
            \draw (b0) -- (ab-12);
            \draw (b0) -- (ab02);
            \draw (b0) -- (ab12);
            \draw (a1) -- (ab-11);
            \draw (a1) -- (ab11);
            \draw (a1) -- (ab-12);
            \draw (a1) -- (ab12);
            \draw (b1) -- (ab-11);
            \draw (b1) -- (ab01);
            \draw (b1) -- (ab11);
            \draw (b1) -- (ab03);
        \end{tikzpicture}
\end{center}
        \caption{$\indr(X)$  as defined in Example~\ref{Example:RepIndComplex}.}
   \label{Figure:PosetRep}
\end{figure}
\begin{figure}
\begin{center}
        \begin{tikzpicture}[line join=round, scale=0.6]
            \node (0) at (0, 0) {$\hat{0}$};
            \node (a0) at (-3,1) {$a_0$};
            \node (b0) at (-2,1) {$b_0$};
            \node (a1) at (2,1) {$a_1$};
            \node (b1) at (3,1) {$b_1$};
            \node (ab0) at (-6,3) {$C_0$};
            \node (ab2) at (-4.5,3) {$C_2$};
            \node (ab4) at (-3,3) {$C_4$};
            \node (ab6) at (-1.5,3) {$C_6$};
            \node (ab7) at (6,3) {$C_7$};
            \node (ab5) at (4.5,3) {$C_5$};
            \node (ab3) at (3,3) {$C_3$};
            \node (ab1) at (1.5,3) {$C_1$};
            \draw (0) -- (a0);
            \draw (0) -- (a1);
            \draw (0) -- (b0);
            \draw (0) -- (b1);
            \draw (a0) -- (ab0);
            \draw (a0) -- (ab2);
            \draw (a0) -- (ab4);
            \draw (a0) -- (ab6);
            \draw (b0) -- (ab0);
            \draw (b0) -- (ab2);
            \draw (b0) -- (ab4);
            \draw (b0) -- (ab6);
            \draw (a1) -- (ab1);
            \draw (a1) -- (ab3);
            \draw (a1) -- (ab5);
            \draw (a1) -- (ab7);
            \draw (b1) -- (ab1);
            \draw (b1) -- (ab3);
            \draw (b1) -- (ab5);
            \draw (b1) -- (ab7);
        \end{tikzpicture}
\end{center}
   \caption{$\indc(X)$  as defined in Example~\ref{Example:CycIndComplex}.}
   \label{Figure:PosetCyclic}
\end{figure}
 
\subsection{Cyclic arithmetic independence complexes}
 \label{Subsection:CyclicComplex}

 We call $\Acal = (E,\Delta, m)$ a
 \emph{weak quasi-arithmetic matroid}
 if $(E,\Delta)$ is a matroid and $m :  \Delta \to \Z_{\ge 1}$ satisfies 
 $ m(  S ) | m(  S \cup \{x\} )$ for  all $S\subseteq E$ and $x\in E$ for which $ S \cup \{x\}\in \Delta$, \ie $m$ is required to satisfy only 
 one of the axioms of a quasi-arithmetic matroid.
 We require only this axiom to define the cyclic arithmetic  independence complex.
 The arithmetic Tutte polynomial of a weak quasi-arithmetic matroid is defined as in \eqref{equation:aritutte}.
 
 \begin{Definition}[Cyclic arithmetic independence complex]
   Let $\Acal=(E,\Delta, m)$ be a weak quasi-arithmetic matroid.
   For $S\in \Delta$, let $\Gcal_m( S) := \Z / m( S)\Z$ and for $a\in E\setminus S$ for which $S\cup \{a\}\in \Delta$,
   let $\pi^{\Gcal_m}_{ S,a}$ be the canonical projection map that sends $\bar 1$ to $\bar 1$.     
   
   Then we define the \emph{cyclic arithmetic independence complex} of $\Acal$ as $\indc(E,\Delta,m):= \ind(E, \Delta, \Gcal_m)$.
 \end{Definition}
 \begin{Lemma}
  \label{Lemma:CycPresheaf}
    Let  $\Acal = (E,\Delta, m)$ be a weak quasi-arithmetic matroid. 
  Then  $\Gcal_m( S)$ is indeed a surjective finite abelian group structure on $\Delta$.
 \end{Lemma}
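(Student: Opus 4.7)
The plan is to verify the two conditions in the definition of a surjective finite abelian group structure directly from the weak quasi-arithmetic matroid axiom. The entire content of the lemma is a translation between divisibility of multiplicities and existence of canonical quotient maps between cyclic groups, so there is really only one nontrivial fact used: the hypothesis $m(S) \mid m(S \cup \{a\})$ whenever $S \cup \{a\} \in \Delta$.

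For condition (i), I would observe that, given $S \in \Delta$ and $a \in E \setminus S$ with $S \cup \{a\} \in \Delta$, the weak quasi-arithmetic axiom gives $m(S) \mid m(S \cup \{a\})$. Therefore the assignment $\bar 1 \mapsto \bar 1$ extends uniquely to a well-defined surjective group homomorphism $\pi^{\Gcal_m}_{S,a} : \Z/m(S \cup \{a\})\Z \twoheadrightarrow \Z/m(S)\Z$, namely reduction modulo $m(S)$. This is exactly $\Gcal_m$ applied to the unique morphism $S \hookrightarrow S \cup \{a\}$ in $\Delta$.

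For condition (ii), given $S \in \Delta$ and distinct $a, b \in E \setminus S$ with $S \cup \{a, b\} \in \Delta$, applying the divisibility axiom twice (e.g.\ first along $S \cup \{a\} \subseteq S \cup \{a,b\}$ and then along $S \subseteq S \cup \{a\}$) shows that $m(S)$ divides $m(S \cup \{a\})$ and $m(S \cup \{b\})$, each of which in turn divides $m(S \cup \{a,b\})$. Hence both compositions in the square \eqref{eq:CommutingMaps} are well-defined group homomorphisms from $\Z/m(S \cup \{a,b\})\Z$ to $\Z/m(S)\Z$. Each of them sends $\bar 1$ to $\bar 1$ by construction, and a homomorphism out of a cyclic group is determined by the image of a generator, so the two compositions coincide.

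There is essentially no obstacle: the argument is a direct unpacking of definitions, and the only mild bookkeeping is noting that the divisibility axiom can be iterated along a two-element chain to guarantee that all four projection maps in the square exist. I would conclude by remarking that this shows $\Gcal_m$ is indeed a (possibly non-torsion-free) surjective finite abelian group structure, so Theorem~\ref{Theorem:SRofSheafIndComplex} applies and yields the cyclic case of Theorem~\ref{Theorem:SRofAriIndComplex}.
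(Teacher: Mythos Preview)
Your proof is correct and follows essentially the same approach as the paper's proof: use the divisibility hypothesis to ensure the canonical maps $\bar 1 \mapsto \bar 1$ are well-defined surjections, then observe the square commutes because both composites send the generator $\bar 1$ to $\bar 1$. The paper's version is terser (it dismisses the commuting squares with ``it is easy to see''), whereas you spell out explicitly why a homomorphism out of a cyclic group is determined by the image of a generator; but there is no substantive difference.
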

 \begin{proof}
  Let  $S\subseteq E$, $a\in E\setminus S$, and $ S \cup \{a\}\in \Delta$.
  Since $ m(  S ) | m(  S \cup \{x\} )$, the map $\pi_{S,a}^{\Gcal_m} : \Z / m(S\cup \{a\}) \Z \to \Z / m(S)$ that sends  $\bar 1$ to $\bar 1$
  is indeed a surjective group homomorphism. It is easy to see that the commuting squares condition is satisfied.
 \end{proof}

 \begin{Example}
   \label{Example:CycIndComplex}
  Let us consider again the arithmetic matroid $\Acal$ that was defined in Example~\ref{Example:RepIndComplex}.
  Its cyclic arithmetic independence complex $\indc(\Acal)$ is shown in 
  Figure~\ref{Figure:PosetCyclic}.
    The commuting square is:
  \begin{equation}
    \xymatrix{
     & \Z/8\Z \ar@{->>}[rd]^{} \ar@{->>}[ld]^{}  &    \\
   \Z/2\Z \ar@{->>}[rd]^{} &   & \ar@{->>}[ld]^{} \Z/2\Z. \\ 
   &  \{0\} &
  }
 \end{equation}
  
  The Stanley--Reisner ideal is
  \begin{equation}
   \begin{split}
    I_P &= \bigl( a_0C_1, a_0C_3, a_0C_5, a_0C_7,\: b_0C_1, b_0C_3, b_0C_5, b_0C_7, \: a_1 C_0,\ldots
    \\ &\qquad    
    a_0a_1, a_0b_1, b_0a_1, b_0b_1, \quad C_0C_1, C_0C_2, \ldots, C_6C_7, \\ &\qquad
            a_0b_0 -(C_0 + C_2 + C_4 + C_6), a_1b_1 - (C_1 + C_3 + C_5 + C_7)
   \bigr).
   \end{split}
  \end{equation}

 \end{Example}

 \begin{Example} 
 \label{Example:nonP}
    Let us consider a quasi-arithmetic matroid whose $h$-vector has negative entries.
    Let   $ \Acal = ( \{1,2\}, 2^{\{1,2\}}, m)$, with   $m(\emptyset)=1$ and $m(\{1\})=m(\{2\})=m(\{1,2\}) = 2$. 
    The underlying matroid is the uniform matroid $U_{2,2}$.
  The arithmetic Tutte polynomial is  $\aritutte_\Acal(x,y) = x^2 + 2 x  - 1 $.

Let us construct $\indc(X)$. The commuting square of cyclic groups and the poset are shown in Figure~\ref{Figure:Poset-NonP}.
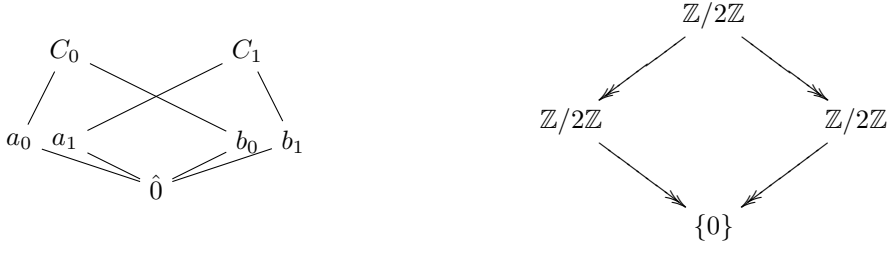
\begin{figure}
\begin{center}
\begin{minipage}{0.49\textwidth}
        \begin{tikzpicture}[line join=round, scale=0.6]
            \node (0) at (0, 0) {$\hat{0}$};
            \node (a0) at (-3,1) {$a_0$};
            \node (a1) at (-2,1) {$a_1$};
            \node (b0) at (2,1) {$b_0$};
            \node (b1) at (3,1) {$b_1$};
            \node (C0) at (-2,3) {$C_0$};
            \node (C1) at (2,3) {$C_1$};
            \draw (0) -- (a0);
            \draw (0) -- (a1);
            \draw (0) -- (b0);
            \draw (0) -- (b1);
            \draw (a0) -- (C0);
            \draw (b0) -- (C0);
            \draw (a1) -- (C1);
            \draw (b1) -- (C1);
        \end{tikzpicture}
\end{minipage}
\begin{minipage}{0.49\textwidth}
          \begin{equation*}
    \xymatrix{
     & \Z/2\Z \ar@{->>}[rd]^{} \ar@{->>}[ld]^{}  &    \\
   \Z/2\Z \ar@{->>}[rd]^{} &   & \ar@{->>}[ld]^{} \Z/2\Z \\ 
   &  \{0\}  &
  }  
 \end{equation*}
\end{minipage}
\end{center}
  \caption{The cyclic arithmetic independence complex  
  and the commuting square of cyclic groups
  corresponding to Example~\ref{Example:nonP}.}
   \label{Figure:Poset-NonP}
\end{figure}
  One can read off the $f$-vector from the arithmetic Tutte polynomial or from the poset.
  It is $(1,4,2)$ and the $h$-vector is $(1,2,-1)$.
  
  For the Stanley--Reisner ideal and the Stanley--Reisner ring we obtain
  \begin{align}
   I_P &=  \bigl( a_0 b_1, a_1 b_0,  a_0a_1, b_0b_1, \, C_0 C_1, a_0 C_1, b_0 C_1, a_1 C_0, b_1 C_0,\,
                                 a_0b_0 - C_0, a_1b_1 - C_1  \bigr) \notag \\
  \K[P] &\cong  \K[ a_0, a_1, b_0, b_1] / ( a_0 b_1, a_1 b_0, a_0a_1, b_0b_1 ).             
  \end{align}
 As a vector space, $\K[P]$ is isomorphic to 
  $ \K[ a_0,b_0  ] \cup \K[ a_1,b_1 ]$.
 Thus, its Hilbert series is 
\begin{align}
   1 + 2 \sum_{i\ge 1}  (i+1) t^i &= 1 + 4t + 6t^2 + 8 t^3 + 10t^4 +\ldots 
   \\ &
   = ( 1 + 2t + 3t^2 + 5t^3 + \ldots)(-t^2 + 2t + 1)  = \frac{ \aritutte_\Acal(t,1) }{(1-t)^2}.
\end{align}
 \end{Example}

 \begin{Remark}
  In general, given a representable arithmetic matroid, the two arithmetic independence complexes that we have defined 
  are not isomorphic.
 For example, $\hat 0$ is a cut vertex of the Hasse diagram of the face poset of the cyclic independence complex in  
 Example~\ref{Example:CycIndComplex}. This is not the case for the representable independence complex in 
 Example~\ref{Example:RepIndComplex}, which was constructed using the same arithmetic matroid.

  The representable independence complex appears to be more interesting as it preserves more structure, \eg the 
  matroid over $\Z$ structure of the matrix (cf.~\cite{fink-moci-2016}). 
 \end{Remark}

 \begin{Remark}
  It is possible to compute the poset of layers of a toric arrangement as follows:
  first construct the arithmetic independence complex derived from a representation and then identify independent sets that 
  correspond to the same flat
  \cite{lenz-computing-2017}.
  It is an open problem whether there is also 
  a 'poset of layers of an arithmetic matroid', in analogy with the lattice of flats of a matroid that exists, even if there is no representation.
  While we are able to construct 
  an arithmetic independence complex in the non-representable case, the surjective finite abelian group structure
  on a simplicial complex defined using the cyclic groups can in general not 
  be extended in a way required for the construction of a poset of layers.
  One would need to assign a cyclic group $\Gcal( S)$ of cardinality $m(S)$ to \emph{each} $ S \subseteq E$ \st
  for $a\in E\setminus  S$, there is a surjection
  $\Gcal( S \cup \{a\}) \twoheadrightarrow \Gcal( S )$ if $ \rank( S \cup \{a\} ) = \rank( S )+1 $,  and
  an injection $\Gcal( S \cup \{a\}) \hookrightarrow \Gcal( S )$ otherwise. In addition, the usual commuting squares conditions would need to be satisfied.
  In the next paragraph we will see that in general, this is not possible.
  
  Let us consider the arithmetic matroid
  $\Acal=(\{1,2\}, \{\emptyset, \{1\}, \{2\}\}, m)$,
  where the 
  multiplicity function $m$ is given by $m(\emptyset)=m(\{1,2\})=3$, $m(\{1\})=6$, and $m(\{2\})=9$.
  This arithmetic matroid can be represented by the list $X=((2,\bar 0), (3, \bar 0))\subseteq \Z\oplus \Z/3\Z$.
  The arithmetic Tutte polynomial is $\aritutte_\Acal(x,y) = 3x + 3y + 9$.   
  The cyclic groups yield the following square:
   \begin{equation}
    \xymatrix{
     &   \Z/3\Z \ar@{^{(}->}[rd]^{\iota_2} \ar@{_{(}->}[ld]_{\iota_1}  &    \\
   \Z/6\Z \ar@{->>}[rd]_{\pi_1} &   & \ar@{->>}[ld]^{\pi_2}  \Z / 9 \Z. \\ 
   &  \Z / 3\Z &
  }
 \end{equation}
 Since we require $\iota_2$ to be injective, 
  $\iota_2( \bar 1) \in \{ \bar 3, \bar 6\}$ must hold. Hence $\pi_2(\iota_2(\bar 1))=\bar 0$.
  On the other hand, $\iota_1(\bar 1) \in \{ \bar 2, \bar 4\}$, which implies $ \pi_1(\iota_1(\bar 1)) \in \{\bar 1, \bar 2\}$. Hence the diagram does not commute.
  A commuting square for this arithmetic matroid can be obtained by replacing the group $\Z/9\Z$ by $\Z / 3 \Z \oplus \Z/ 3\Z$.
\end{Remark}

\section{Future directions and related work}

\begin{Remark}
 The construction of arithmetic independence posets and hence, of Stanley--Reisner rings can be extended
 to the setting of group actions on semimatroids, that where recently introduced by 
 Delucchi and Riedel \cite{delucchi-riedel-2015}.
 This will be done in an upcoming paper of D'Al\`i and Delucchi.
\end{Remark}
 
 \begin{Remark}
  As stated in the introduction, in the case of matroids, algebraic structures associated to matroids have been 
  used to prove inequalities for their $f$ and $h$-vectors.
  For arithmetic matroid, only very little is currently known about the shape of these vectors  
  \cite{wang-yeh-zhou-2015}.
  It would be interesting to prove stronger inequalities.
 However, one cannot expect  results such as log-concavity:
 let $\alpha\in \Z_{\ge 1}$ and let 
 $X= (e_1,e_2, \ldots, e_{r-1}, e_1+\ldots + e_{r-1} + \alpha e_d)\subseteq \Z^r$.
 The arithmetic $f$-polynomial is  
 $f_X(t) = t^r + \binom{r}{1} t^{r-1} + \ldots + \binom{r}{r-1} t + \alpha$
    \cite{lenz-thesis}.
 For $ r \ge 4 $ and sufficiently large $\alpha$, the coefficients of this polynomial are not unimodal and hence also not log-concave. 
 
 Proving that our arithmetic independence posets are Cohen--Macaulay would imply  
 that their $h$-vector is positive
 \cite[Theorem~3.10]{stanley-simplicial-1991}.
 In the case of arithmetic matroids, this follows from the positivity of the coefficients 
 of the arithmetic Tutte polynomial, which is known \cite{backman-lenz-2016,moci-adderio-2013}.
\end{Remark}

  \begin{Remark}
  The Stanley--Reisner ring of the matroid represented by a matrix $X$ is isomorphic 
  to the equivariant cohomology ring of a hypertoric varieted associated with $X$
  \cite[Theorem~3.2.2]{proudfoot-2008}.
  It would be nice to find a similar interpretation of the arithmetic Stanley--Reisner ring
  defined by an integer matrix $X$.
  A somewhat similar goal has been achieved in 
  \cite{deconcini-procesi-vergne-2013}, where it was shown that  
  equivariant $K$-theory and cohomology of certain spaces is related
  to algebraic structures that use Dahmen--Micchelli spaces as building blocks.
  This was done both in the continuous and the discrete case, which correspond to the matroid and the arithmetic matroid case.
  \end{Remark}

  \begin{Remark}
  Let $X \in \R^{d\times N}$ be a matrix and let $\Delta^*$ be the independence complex of the dual matroid.
  The quotient of $\K[\Delta^*]$  by a linear system of parameters is isomorphic to the continuous Dahmen--Micchelli space $\Dcal(X)$
  \cite{procesi-concini-2008}. 
  This suggests a similar relationship between the 
  Stanley--Reisner ring of an arithmetic matroid that is represented by an integer matrix $X \in \Z^{d \times N}$
  and the discrete Dahmen--Micchelli space $\DM(X)$ (see  \cite{concini-procesi-book} for a definition), 
  or its dual, the  periodic $\Pcal$-space \cite{lenz-arithmetic-2016}.
  These two spaces already have the correct Hilbert series, \ie $t^{N-d} \aritutte_{\Acal_X}(1,\frac{1}{t})$.
  The discrete Dahmen--Michelli space $\DM(X)$ can be decomposed into `local' $\Dcal(X)$-spaces that are attached to the vertices of the toric 
  arrangement \cite[(16.1)]{concini-procesi-book}. Is there also a decomposition of the arithmetic Stanley--Reisner ring (derived from  a representation)
  into `local' matroid Stanley--Reisner rings attached to the vertices of the toric arrangement?
 \end{Remark}

\begin{Remark}
 A recent draft of Martino \cite{martino-2017} also suggest a method to construct 
 an arithmetic Stanley--Reisner ring,
 given a representation of an arithmetic matroid.
 The approach is somewhat similar to ours, but instead of layer groups, the abelian groups ($\Z$-modules) that Fink and Moci assign to a list of
 integer vectors in the context of matroids over $\Z$ \cite{fink-moci-2016} are used.
 This structure is in some sense dual to layer groups (\cite[Theorem~D]{delucchi-riedel-2015}),
 so one could expect that the arithmetic independence complex obtained in this way is isomorphic to our 
 arithmetic independence complex derived from a representation. However, the author was not able to verify this for the following reason: 
 the definition in \cite{martino-2017} relies on the fact 
 that every finitely generated abelian group is isomorphic to  $\Z^{d} \times H$ for some
 $d\in \Z_{\ge 0}$ and a finite group $H$. But this isomorphism is not canonical and 
 it is not specified in \cite{martino-2017}, hence the poset does not appear to be well-defined.
\end{Remark}

\renewcommand{\MR}[1]{} 
\bibliographystyle{amsplain}

\begin{thebibliography}{10}

\bibitem{adiprasito-huh-katz-2015}
Karim Adiprasito, June Huh, and Eric Katz, \emph{Hodge theory for combinatorial
  geometries}, 2015, \url{arXiv:1511.02888}, 61~pages.

\bibitem{ardila-denham-huh-2017}
Federico Ardila, Graham Denham, and June Huh, in preparation.

\bibitem{ardila-postnikov-2009}
Federico Ardila and Alexander Postnikov, \emph{Combinatorics and geometry of
  power ideals}, Trans. Amer. Math. Soc. \textbf{362} (2010), no.~8,
  4357--4384. \MR{2608410}

\bibitem{backman-lenz-2016}
Spencer Backman and Matthias Lenz, \emph{A convolution formula for {T}utte
  polynomials of arithmetic matroids and other combinatorial structures}, 2016,
  \url{arXiv:1602.02664}.

\bibitem{berget-2010}
Andrew Berget, \emph{Products of linear forms and {T}utte polynomials},
  European J. Combin. \textbf{31} (2010), no.~7, 1924--1935. \MR{2673030
  (2011j:05062)}

\bibitem{bjoerner-1984}
Anders Bj{\"o}rner, \emph{Posets, regular {CW} complexes and {B}ruhat order},
  European J. Combin. \textbf{5} (1984), no.~1, 7--16. \MR{746039 (86e:06002)}

\bibitem{bjoerner-1992}
\bysame, \emph{The homology and shellability of matroids and geometric
  lattices}, Matroid applications, Encyclopedia Math. Appl., vol.~40, Cambridge
  Univ. Press, Cambridge, 1992, pp.~226--283. \MR{1165544 (94a:52030)}

\bibitem{branden-moci-2014}
Petter Br{\"a}nd{\'e}n and Luca Moci, \emph{The multivariate arithmetic {T}utte
  polynomial}, Trans. Amer. Math. Soc. \textbf{366} (2014), no.~10, 5523--5540.
  \MR{3240933}

\bibitem{brion-vergne-1999}
Michel Brion and Mich{\`e}le Vergne, \emph{Arrangement of hyperplanes. {I}.
  {R}ational functions and {J}effrey--{K}irwan residue}, Ann. Sci. \'Ecole
  Norm. Sup. (4) \textbf{32} (1999), no.~5, 715--741. \MR{1710758
  (2001e:32039)}

\bibitem{brylawski-oxley-1992}
Thomas Brylawski and James Oxley, \emph{The {T}utte polynomial and its
  applications}, Matroid applications, Encyclopedia Math. Appl., vol.~40,
  Cambridge Univ. Press, Cambridge, 1992, pp.~123--225. \MR{1165543
  (93k:05060)}

\bibitem{callegaro-delucchi-2017}
Filippo Callegaro and Emanuele Delucchi, \emph{The integer cohomology algebra
  of toric arrangements}, Adv. Math. \textbf{313} (2017), 746--802.

\bibitem{chari-1997}
Manoj~K. Chari, \emph{Two decompositions in topological combinatorics with
  applications to matroid complexes}, Trans. Amer. Math. Soc. \textbf{39}
  (1997), no.~10, 3925--3943.

\bibitem{cox-little-schenck-2011}
David~A. Cox, John~B. Little, and Henry~K. Schenck, \emph{Toric varieties},
  Graduate Studies in Mathematics, vol. 124, American Mathematical Society,
  Providence, RI, 2011. \MR{2810322 (2012g:14094)}

\bibitem{moci-adderio-2013}
Michele D'Adderio and Luca Moci, \emph{Arithmetic matroids, the {T}utte
  polynomial and toric arrangements}, Adv. Math. \textbf{232} (2013), no.~1,
  335--367.

\bibitem{dall-thesis}
Aaron Dall, \emph{Matroids: $h$-vectors, zonotopes, and {L}awrence polytopes},
  Ph.D. thesis, Universitat Polit\`ecnica de Catalunya, Barcelona, 2015.

\bibitem{delucchi-dantonio-2016}
Giacomo d'Antonio and Emanuele Delucchi, \emph{Minimality of toric
  arrangements}, J. Eur. Math. Soc. (JEMS) \textbf{17} (2015), no.~3, 483--521.
  \MR{3323196}

\bibitem{BoxSplineBook}
Carl de~Boor, Klaus H{\"o}llig, and Sherman~D. Riemenschneider, \emph{Box
  splines}, Applied Mathematical Sciences, vol.~98, Springer-Verlag, New York,
  1993. \MR{1243635 (94k:65004)}

\bibitem{deconcini-procesi-2005}
Corrado De~Concini and Claudio Procesi, \emph{On the geometry of toric
  arrangements}, Transform. Groups \textbf{10} (2005), no.~3-4, 387--422.
  \MR{2183118 (2006m:32027)}

\bibitem{procesi-concini-2008}
\bysame, \emph{Hyperplane arrangements and box splines}, Michigan Math. J.
  \textbf{57} (2008), 201--225, With an appendix by A. Bj{\"o}rner, Special
  volume in honor of Melvin Hochster. \MR{2492449 (2010f:52037)}

\bibitem{concini-procesi-book}
\bysame, \emph{Topics in hyperplane arrangements, polytopes and box-splines},
  Universitext, Springer, New York, 2011. \MR{2722776}

\bibitem{deconcini-procesi-vergne-2013}
Corrado De~Concini, Claudio Procesi, and Mich\`ele Vergne, \emph{Box splines
  and the equivariant index theorem}, J. Inst. Math. Jussieu \textbf{12}
  (2013), 503--544.

\bibitem{delucchi-riedel-2015}
Emanuele Delucchi and Sonja Riedel, \emph{Group actions on semimatroids}, 2015,
  \url{arXiv:1507.06862}.

\bibitem{ehrenborg-readdy-slone-2009}
Richard Ehrenborg, Margaret Readdy, and Michael Slone, \emph{Affine and toric
  hyperplane arrangements}, Discrete Comput. Geom. \textbf{41} (2009), no.~4,
  481--512. \MR{2496314 (2010a:52035)}

\bibitem{feichtner-yuzvinsky-2004}
Eva~Maria Feichtner and Sergey Yuzvinsky, \emph{Chow rings of toric varieties
  defined by atomic lattices}, Invent. Math. \textbf{155} (2004), no.~3,
  515--536. \MR{2038195 (2004k:14009)}

\bibitem{fink-moci-2016}
Alex Fink and Luca Moci, \emph{Matroids over a ring}, J. Eur. Math. Soc. (JEMS)
  \textbf{18} (2016), no.~4, 681--731. \MR{3474454}

\bibitem{garsia-stanton-1984}
Adriano Garsia and Dennis Stanton, \emph{Group actions of {S}tanley-{R}eisner
  rings and invariants of permutation groups}, Adv. in Math. \textbf{51}
  (1984), no.~2, 107--201. \MR{736732}

\bibitem{hibi-1992}
Takayuki {Hibi}, \emph{{Face number inequalities for matroid complexes and
  Cohen-Macaulay types of Stanley-Reisner rings of distributive lattices.}},
  {Pac. J. Math.} \textbf{154} (1992), no.~2, 253--264.

\bibitem{huh-hvector-2015}
June {Huh}, \emph{{$ h$-vectors of matroids and logarithmic concavity.}}, {Adv.
  Math.} \textbf{270} (2015), 49--59.

\bibitem{kubitzke-le-2016}
Martina Juhnke-Kubitzke and Dinh~Van Le, \emph{Flawlessness of h-vectors of
  broken circuit complexes}, International Mathematics Research Notices (first
  published online 24 December 2016), 21~pages.

\bibitem{lawrence-2011}
Jim Lawrence, \emph{Enumeration in torus arrangements}, European J. Combin.
  \textbf{32} (2011), no.~6, 870--881. \MR{2821558}

\bibitem{lenz-thesis}
Matthias Lenz, \emph{Combinatorial aspects of zonotopal algebra}, Ph.D. thesis,
  Technische Universit\"at Berlin, 2012.

\bibitem{lenz-mason-2013}
\bysame, \emph{The $f$-vector of a representable-matroid complex is
  log-concave}, Adv. in Appl. Math. \textbf{51} (2013), no.~5, 543--545.
  \MR{3118543}

\bibitem{lenz-arithmetic-2016}
\bysame, \emph{Splines, lattice points, and arithmetic matroids}, Journal of
  Algebraic Combinatorics \textbf{43} (2016), no.~2, 277--324.

\bibitem{lenz-computing-2017}
\bysame, \emph{Computing the poset of layers of a toric arrangement}, 2017,
  \url{arXiv:1708.06646}.

\bibitem{lenz-ppcram-2017}
\bysame, \emph{On powers of {P}l\"ucker coordinates and representability of
  arithmetic matroids}, 2017, \url{arXiv:1703.10520}.

\bibitem{martino-2017}
Ivan Martino, \emph{Face module for realizable {Z}-matroids}, 2017,
  \url{arXiv:1705.05816}.

\bibitem{mcmullen-1971}
Peter McMullen, \emph{The numbers of faces of simplicial polytopes}, Israel J.
  Math. \textbf{9} (1971), 559--570. \MR{0278183 (43 \#3914)}

\bibitem{moci-tutte-2012}
Luca Moci, \emph{A {T}utte polynomial for toric arrangements}, Trans. Amer.
  Math. Soc. \textbf{364} (2012), no.~2, 1067--1088. \MR{2846363}

\bibitem{orlik-solomon-1980}
Peter Orlik and Louis Solomon, \emph{Combinatorics and topology of complements
  of hyperplanes}, Invent. Math. \textbf{56} (1980), no.~2, 167--189.
  \MR{558866 (81e:32015)}

\bibitem{orlik-terao-1992}
Peter Orlik and Hiroaki Terao, \emph{Arrangements of hyperplanes}, Grundlehren
  der Mathematischen Wissenschaften [Fundamental Principles of Mathematical
  Sciences], vol. 300, Springer-Verlag, Berlin, 1992. \MR{1217488 (94e:52014)}

\bibitem{orlik-terao-1994}
\bysame, \emph{Commutative algebras for arrangements}, Nagoya Math. J.
  \textbf{134} (1994), 65--73. \MR{1280653 (95j:52025)}

\bibitem{MatroidTheory-Oxley}
James~G. Oxley, \emph{Matroid theory}, Oxford Science Publications, The
  Clarendon Press Oxford University Press, New York, 1992. \MR{1207587
  (94d:05033)}

\bibitem{proudfoot-2008}
Nicholas~J. Proudfoot, \emph{A survey of hypertoric geometry and topology},
  Toric topology, Contemp. Math., vol. 460, Amer. Math. Soc., Providence, RI,
  2008, pp.~323--338. \MR{2428365}

\bibitem{provan-1997}
John~Scott Provan, \emph{Decompositions, shellings, and diameters of simplicial
  complexes and convex polyhedra}, ProQuest LLC, Ann Arbor, MI, 1977,
  Ph.~D.~Thesis -- Cornell University. \MR{2627467}

\bibitem{stanley-gtheorem-1980}
Richard~P. Stanley, \emph{The number of faces of a simplicial convex polytope},
  Adv. in Math. \textbf{35} (1980), no.~3, 236--238. \MR{563925 (81f:52014)}

\bibitem{stanley-simplicial-1991}
\bysame, \emph{{$f$}-vectors and {$h$}-vectors of simplicial posets}, J. Pure
  Appl. Algebra \textbf{71} (1991), no.~2-3, 319--331. \MR{1117642}

\bibitem{stanley-1996}
\bysame, \emph{Combinatorics and commutative algebra}, 2nd ed., Progress in
  mathematics, {Birkh{\"a}user}, Boston, Mass., 1996.

\bibitem{stanley-2007}
\bysame, \emph{An introduction to hyperplane arrangements}, Geometric
  combinatorics, IAS/Park City Math. Ser., vol.~13, Amer. Math. Soc.,
  Providence, RI, 2007, pp.~389--496. \MR{2383131}

\bibitem{swartz-2005}
Ed~Swartz, \emph{Lower bounds for {$h$}-vectors of {$k$}-{CM}, independence,
  and broken circuit complexes}, SIAM J. Discrete Math. \textbf{18} (2004/05),
  no.~3, 647--661. \MR{2134424 (2005k:05067)}

\bibitem{swartz-2014}
Ed~Swartz, \emph{Thirty-five years and counting}, 2014, \url{arXiv:1411.0987}.

\bibitem{terao-2002}
Hiroaki {Terao}, \emph{{Algebras generated by reciprocals of linear forms.}},
  {J. Algebra} \textbf{250} (2002), no.~2, 549--558 (English).

\bibitem{wagner-1999}
David~G. Wagner, \emph{Algebras related to matroids represented in
  characteristic zero}, European J. Combin. \textbf{20} (1999), no.~7,
  701--711. \MR{1721927 (2001j:13002)}

\bibitem{wang-yeh-zhou-2015}
Suijie Wang, Yeong-Nan Yeh, and Fengwei Zhou, \emph{Bounds on characteristic
  polynomials}, 2015, \url{arXiv:1209.5185v4}.

\end{thebibliography}
\providecommand{\bysame}{\leavevmode\hbox to3em{\hrulefill}\thinspace}
\providecommand{\MR}{\relax\ifhmode\unskip\space\fi MR }
\providecommand{\MRhref}[2]{%
  \href{http://www.ams.org/mathscinet-getitem?mr=#1}{#2}
}
\providecommand{\href}[2]{#2}

\end{document}